\newtheorem{theo+}              {Theorem}           [section]
\newtheorem{prop+}  [theo+]     {Proposition}
\newtheorem{coro+}  [theo+]     {Corollary}
\newtheorem{lemm+}  [theo+]     {Lemma}
\newtheorem{exam+}  [theo+]     {Example}
\newtheorem{rema+}  [theo+]     {Remark}
\newtheorem{defi+}  [theo+]     {Definition}
\def \r{\mbox{${\mathbb R}$}}
\newenvironment{theorem}{\begin{theo+}}{\end{theo+}}
\newenvironment{proposition}{\begin{prop+}}{\end{prop+}}
\newenvironment{corollary}{\begin{coro+}}{\end{coro+}}
\newenvironment{lemma}{\begin{lemm+}}{\end{lemm+}}
\newenvironment{definition}{\begin{defi+}}{\end{defi+}}
\theoremstyle{plain} \theoremstyle{remark}
\newtheorem{remark}{Remark}
\newtheorem{example}{Example}
\def\E{/\kern-1.0em \equiv }
\title{On $f$-biharmonic maps and $f$-biharmonic submanifolds}
\author{Ye-Lin Ou $^{*}$}
\thanks{$^{*}$ Research supported by
NSF of Guangxi (P. R. China), 2011GXNSFA018127.}
\address{Department of
Mathematics,\newline\indent Texas A $\&$ M University-Commerce,
\newline\indent Commerce, TX 75429,\newline\indent USA.\newline\indent
E-mail:yelin.ou@tamuc.edu }
\begin{document}

\title[On $f$-biharmonic maps and $f$-biharmonic submanifolds]{On $f$-biharmonic maps and $f$-biharmonic submanifolds}

\subjclass{58E20, 53C43} \keywords{$f$-biharmonic maps, $f$-biharmonic submanifolds, $f$-biharmonic
functions, $f$-biharmonic hypersurfaces,  $f$-biharmonic
curves.}
\date{06/12/2013}
\maketitle

\section*{Abstract}
\begin{quote}
{\footnotesize $f$-Biharmonic maps are the extrema of the $f$-bienergy functional. $f$-biharmonic submanifolds are submanifolds whose defining isometric immersions are $f$-biharmonic maps. In this paper, we prove that an $f$-biharmonic map from a compact Riemannian manifold into a non-positively curved manifold with constant $f$-bienergy density is a harmonic map; any $f$-biharmonic function on a compact manifold is constant, and that the inversions about $S^m$ for $m\ge 3$ are proper $f$-biharmonic conformal diffeomorphisms. We derive $f$-biharmonic submanifolds equations and prove that a surface in a manifold $(N^n, h)$ is an $f$-biharmonic surface if and only it can be biharmonically conformally immersed into $(N^n,h)$. We also give a complete classification of $f$-biharmonic curves in $3$-dimensional Euclidean space. Many examples of proper $f$-biharmonic maps and $f$-biharmonic surfaces and curves are given.} 
\end{quote}
\section{Harmonic, biharmonic, $f$-harmonic and $f$-biharmonic maps}
All objects including manifolds, tension fields, and maps in this paper are assumed to be smooth if no otherwise statement is given.\\
{\bf Harmonic maps and their equations:} Harmonic maps are critical points of the energy functional for maps $\phi: (M,g)\longrightarrow (N,h)$ between Riemannian manifolds:
\begin{equation}\notag
E(\phi)=\frac{1}{2}\int_\Omega |{\rm d}\phi|^2v_g,
\end{equation}
where $\Omega$ is a compact domain of $ M$. The Euler-Lagrange
equation gives the harmonic map equation ( \cite{ES})
\begin{equation}\label{fhm}
\tau(\phi) \equiv {\rm Tr}_g\nabla\,d \phi=0,
\end{equation}
where $\tau(\phi)={\rm Tr}_g\nabla\,d \phi$ is called the tension field of
the map $\phi$.\\
{\bf Biharmonic maps and their equations:} Biharmonic maps are critical points of the bienergy functional for maps $\phi: (M,g)\longrightarrow (N,h)$ between Riemannian manifolds:
\begin{equation}\notag
E_{2}(\phi)=\frac{1}{2}\int_\Omega |\tau(\phi)|^2v_g,
\end{equation}
where $\Omega$ is a compact domain of $ M$. The
Euler-Lagrange equation of this functional gives the biharmonic map
equation (\cite{Ji1})
\begin{equation}\label{BTF}
\tau_{2}(\phi):={\rm
Trace}_{g}(\nabla^{\phi}\nabla^{\phi}-\nabla^{\phi}_{\nabla^{M}})\tau(\phi)
- {\rm Trace}_{g} R^{N}({\rm d}\phi, \tau(\phi)){\rm d}\phi =0,
\end{equation}
where $R^{N}$ denotes the curvature operator of $(N, h)$ defined by
$$R^{N}(X,Y)Z=
[\nabla^{N}_{X},\nabla^{N}_{Y}]Z-\nabla^{N}_{[X,Y]}Z.$$\\
{\bf f-Harmonic maps and their equations:} $f$-Harmonic maps are critical points of the $f$-energy functional for maps $\phi: (M,g)\longrightarrow (N,h)$ between Riemannian manifolds:
\begin{equation}\notag
E_{f}(\phi)=\frac{1}{2}\int_\Omega f\,|{\rm d}\phi|^2v_g,
\end{equation}
where $\Omega$ is a compact domain of $ M$. The Euler-Lagrange
equation gives the $f$-harmonic map equation ( \cite{Co},
\cite{OND})
\begin{equation}\label{fhm}
\tau_f(\phi)\equiv f\tau(\phi)+{\rm d}\phi({\rm grad}\,f)=0,
\end{equation}
where $\tau(\phi)={\rm Tr}_g\nabla\,d \phi$ is the tension field of
$\phi$ vanishing of which means $\phi$ is a harmonic map.\\
{\bf f-Biharmonic maps and their equations:} $f$-Biharmonic maps are critical points of the $f$-bienergy functional for maps $\phi: (M,g)\longrightarrow (N,h)$ between Riemannian manifolds:
\begin{equation}\notag
E_{2,f}(\phi)=\frac{1}{2}\int_\Omega f\,|\tau(\phi)|^2v_g,
\end{equation}
where $\Omega$ is a compact domain of $ M$. The Euler-Lagrange
equation gives the $f$-biharmonic map equation ( \cite{Lu})
\begin{equation}\label{FBH}
\tau_{2,f} (\phi)\equiv f\tau_2(\phi)+(\Delta f)\tau(\phi)+2\nabla^{\phi}_{{\rm grad}\, f}\tau(\phi)=0,
\end{equation}
where $\tau(\phi)$ and $\tau_2( \phi)$ are the tension and bitension fields of
$\phi$ respectively.\\
{\bf Bi-f-harmonic maps and their equations:} Bi-$f$-harmonic maps are critical points of the bi-$f$-energy functional for maps $\phi: (M,g)\longrightarrow (N,h)$ between Riemannian manifolds:
\begin{equation}\label{Bif}
E^2_{f}(\phi)=\frac{1}{2}\int_\Omega |\tau_f (\phi)|^2v_g,
\end{equation}
where $\Omega$ is a compact domain of $ M$. The Euler-Lagrange
equation gives the bi-$f$-harmonic map equation ( \cite{OND})
\begin{equation}\label{fbih}
\tau^2_{f} (\phi)\equiv f J^{\phi}(\tau_f(\phi))-\nabla^{\phi}_{{\rm grad}\, f}\tau_f(\phi)=0,
\end{equation}
where $\tau_f(\phi)$ is the f-tension field of the map
$\phi$ and $J^{\phi}$ is the Jacobi operator of the map defined by $J^{\phi}(X)=-[{\rm Tr}_g\nabla^{\phi}\nabla^{\phi}X-\nabla^{\phi}_{\nabla^M}X-R^N(d \phi,\;X)d \phi$].
\begin{remark}
Note that in \cite{OND}, the authors used the name ``$f$-biharmonic maps" for the critical points of the functional (\ref{Bif}). We think that it is more reasonable to call them ``bi-$f$-harmonic maps" as parallel to ``biharmonic maps".
\end{remark}
Clearly, we have the following relationships among these different types of harmonic maps:
$$\{{\rm \bf Harmonic\; maps}\}\subset \{{\rm \bf Biharmonic \;maps} \}\subset \{{\rm \bf f-Biharmonic \;maps}\},$$
$$\{{\rm \bf Harmonic\; maps}\}\subset \{{\rm \bf f-harmonic \;maps}\} \subset \{{\rm \bf Bi-f-harmonic \;maps}\}.$$

From now on we will call an $f$-biharmonic map which is neither harmonic nor biharmonic a {\em proper $f$-biharmonic map}.\\

Harmonic maps as a generalization of important concepts of geodesics, minimal surfaces, and harmonic functions have been studied extensively with tremendous progress in the past 40 plus years. There are volumes of books and literatures about the beautiful theory, important applications, and interesting links of harmonic maps to other areas of mathematics and theoretical physics including nonlinear partial differential equations, holomorphic maps in several complex variables, the theory of stochastic processes, liquid crystals in materials science, and the nonlinear field theory. \\

The study of biharmonic maps was proposed in \cite{EL} and Jiang \cite{Ji1}, \cite{Ji2}, \cite{Ji3} made a first serious study on these maps by using the first and second variational formulas of the bienergy functional and specializing on the biharmonic isometric immersions which nowadays are called biharmonic submanifolds. Very interestingly, the concept of biharmonic submanifolds was also introduced in a different way by B. Y. Chen in \cite{Ch} in his program of understanding the finite type submanifolds in Euclidean spaces. Since 2000, biharmonic maps have been receiving a growing attention and have become a popular subject of study with many progresses. For some recent geometric study of general biharmonic maps see \cite{BK}, \cite{BFO2}, \cite{BMO2}, \cite{MO}, \cite{NUG}, \cite{Ou1}, \cite{Ou4}, \cite{OL}, \cite{Oua} and the references therein. For some recent study of biharmonic submanifolds see \cite{CI}, \cite{Ji2}, \cite{Ji3}, \cite{Di}, \cite{CMO1}, \cite{CMO2}, \cite{BMO1}, \cite{BMO3}, \cite{Ou3}, \cite{OT}, \cite{OW}, \cite{NU}, \cite{TO}, \cite{CM}, \cite{AGR} and the references therein. For biharmonic conformal immersions and submersions see \cite{Ou2}, \cite{Ou5}, \cite{BFO1}, \cite{LO}, \cite{WO} and the references therein.\\

$f$-Biharmonic maps was introduced in \cite{Lu} where the author calculated the first variation to obtain the $f$-biharmonic map equation and the equation for the $f$-biharmonic conformal maps between the same dimensional manifolds. In this paper, we study some basic properties of $f$-biharmonic maps and introduce the concept of $f$-biharmonic submanifolds. We prove that an $f$-biharmonic map from a compact Riemannian manifold into a non-positively curved manifold with constant $f$-bienergy density is a harmonic map (Theorem \ref{NPC}); any $f$-biharmonic function on a compact manifold is constant (Corollary \ref{C25}), and that the inversions about $S^m$ for $m\ge 3$ are proper $f$-biharmonic conformal diffeomorphisms (Proposition \ref{INVERSE}). We derive $f$-biharmonic submanifolds equations (Theorem \ref{HSF} and Corollary \ref{CSMFLD}) and prove that a surface in a manifold $(N^n, h)$ is an $f$-biharmonic surface if and only it can be biharmonically conformally immersed into $(N^n, h)$ (Corollary \ref{CMC}). We also give a complete classification of $f$-biharmonic curves in $3$-dimensional Euclidean space (Theorem \ref{R3}) according to which proper $f$-biharmonic curves are some special subclasses of planar curves or general helices in $\r^3$. Many examples of proper $f$-biharmonic maps and $f$-biharmonic surfaces and curves are given.
\section{Some properties and examples of $f$-biharmonic maps}

$f$-Biharmonic maps are critical points of the $f$-bienergy functional for maps $\phi: (M,g)\longrightarrow (N,h)$ between Riemannian manifolds:
\begin{equation}\notag
E_{2,f}(\phi)=\frac{1}{2}\int_\Omega f\,|\tau(\phi)|^2v_g,
\end{equation}
where $\Omega$ is a compact domain of $ M$.\\

The following theorem was proved in \cite{Lu}. Here, for completeness, we give a brief outline of the proof. Note also  that our notations are different from those in \cite{Lu} and it will show that ours are more compactible.\\
\begin{theorem} 
A map $\phi: (M,g)\longrightarrow (N,h)$ between Riemannian manifolds is an $f$-biharmonic map if and only if
\begin{equation}\label{FBH}
\tau_{2,f} (\phi)\equiv f\tau_2(\phi)+(\Delta f)\tau(\phi)+2\nabla^{\phi}_{{\rm grad}\, f}\tau(\phi)=0,
\end{equation}
where $\tau(\phi)$ and $\tau_2( \phi)$ are the tension and bitension fields of
$\phi$ respectively. 
\end{theorem}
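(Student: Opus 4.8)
The plan is to compute the first variation of the $f$-bienergy and read off its Euler--Lagrange operator; the ``if and only if'' then follows immediately from the fundamental lemma of the calculus of variations. Let $\phi_t$ be a smooth variation of $\phi=\phi_0$ with variation field $V={\rm d}\Phi(\partial_t)\big|_{t=0}$ compactly supported in the interior of $\Omega$, where $\Phi\colon M\times(-\epsilon,\epsilon)\to N$ is the associated map. Since $f$ is independent of $t$,
\begin{equation}\notag
\frac{d}{dt}E_{2,f}(\phi_t)=\int_\Omega f\,\langle\nabla^{\Phi}_{\partial_t}\tau(\phi_t),\tau(\phi_t)\rangle\,v_g .
\end{equation}
The first ingredient is the standard commutation formula from the variational theory of (bi)harmonic maps: writing $\Delta^{\phi}={\rm Tr}_g(\nabla^{\phi}\nabla^{\phi}-\nabla^{\phi}_{\nabla^M})$ for the rough Laplacian on $\phi^{-1}TN$, one has, using $[\partial_t,e_i]=0$ and the symmetry of the second fundamental form of $\Phi$,
\begin{equation}\notag
\nabla^{\Phi}_{\partial_t}\tau(\phi_t)\big|_{t=0}=\Delta^{\phi}V+{\rm Tr}_g R^N(V,{\rm d}\phi){\rm d}\phi .
\end{equation}

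Substituting at $t=0$ reduces the problem to evaluating $\int_\Omega f\,\langle\Delta^{\phi}V+{\rm Tr}_g R^N(V,{\rm d}\phi){\rm d}\phi,\ \tau(\phi)\rangle\,v_g$, and the task is to move the operators off $V$ while carrying the weight $f$. For the curvature term I would use only the pointwise symmetries of $R^N$: the bilinear expression $\langle{\rm Tr}_g R^N(A,{\rm d}\phi){\rm d}\phi,B\rangle$ is symmetric in $A$ and $B$, so
\begin{equation}\notag
\langle{\rm Tr}_g R^N(V,{\rm d}\phi){\rm d}\phi,\tau\rangle=\langle V,{\rm Tr}_g R^N(\tau,{\rm d}\phi){\rm d}\phi\rangle=-\langle V,{\rm Tr}_g R^N({\rm d}\phi,\tau){\rm d}\phi\rangle ,
\end{equation}
with no integration by parts needed, the scalar $f$ being inert here. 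For the rough-Laplacian term I would invoke the self-adjointness of $\Delta^{\phi}$, whose boundary contributions vanish as $V$ is compactly supported, in the form $\int_\Omega\langle\Delta^{\phi}V,f\tau\rangle\,v_g=\int_\Omega\langle V,\Delta^{\phi}(f\tau)\rangle\,v_g$.

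The genuinely new point compared with the unweighted biharmonic computation, and the step I expect to require the most care, is the weighted Leibniz rule for $\Delta^{\phi}$ applied to $f\tau$. Expanding $\nabla^{\phi}_{e_i}\nabla^{\phi}_{e_i}(f\tau)$ and $\nabla^{\phi}_{\nabla^M_{e_i}e_i}(f\tau)$ in a local orthonormal frame and collecting terms by the order to which $f$ is differentiated gives
\begin{equation}\notag
\Delta^{\phi}(f\tau)=f\,\Delta^{\phi}\tau+(\Delta f)\,\tau+2\nabla^{\phi}_{{\rm grad}\,f}\tau ,
\end{equation}
the pure second-order part in $f$ assembling into $(\Delta f)\tau$ and the mixed first-order terms into $2\nabla^{\phi}_{{\rm grad}\,f}\tau$; pinning down these two coefficients exactly is the heart of the argument. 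Combining the three pieces and recalling $\tau_2(\phi)=\Delta^{\phi}\tau-{\rm Tr}_g R^N({\rm d}\phi,\tau){\rm d}\phi$, the term $f\Delta^{\phi}\tau$ merges with the curvature contribution to form $f\tau_2(\phi)$, and one obtains
\begin{equation}\notag
\frac{d}{dt}E_{2,f}(\phi_t)\Big|_{t=0}=\int_\Omega\big\langle V,\ f\tau_2(\phi)+(\Delta f)\tau(\phi)+2\nabla^{\phi}_{{\rm grad}\,f}\tau(\phi)\big\rangle\,v_g .
\end{equation}
Since $V$ ranges over all compactly supported variation fields, $\phi$ is $f$-biharmonic if and only if the bracketed section vanishes, which is precisely $\tau_{2,f}(\phi)=0$.
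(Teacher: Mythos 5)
Your proposal is correct and takes essentially the same route as the paper's proof: both compute the first variation, move the operators onto $f\tau(\phi)$ via the curvature symmetry and the divergence theorem (i.e.\ self-adjointness of the Jacobi operator $J^{\phi}$), and then expand the result by a weighted Leibniz rule --- your identity $\Delta^{\phi}(f\tau)=f\Delta^{\phi}\tau+(\Delta f)\tau+2\nabla^{\phi}_{{\rm grad}\,f}\tau$ is precisely the formula the paper imports as Formula (7) of \cite{Ou1}. The only difference is presentational: you derive inline the commutation formula and the product rule that the paper cites as black boxes.
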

\begin{proof}
Noting that the function $f$ is not change (so does not depend on $t$) during a variation of the $f$-bienergy we can use the standard method (see, e.g., \cite{BK} or \cite{Ji1}) of calculating the first variation of the bienergy functional to have
\begin{eqnarray}
\frac{\partial}{\partial t}E_{2,f}(\phi_t)|_{t=0} &=&\frac{1}{2}\int_{\Omega} f \left\{ \frac{\partial}{\partial t} \langle \tau(\phi_t), \tau(\phi_t)\rangle \right\}_{t=0} v_g \\\notag &=& -\int_{\Omega} f \langle \tau(\phi), J^{\phi}(V)\rangle  v_g\\\notag
 &=& \int_{\Omega} \langle f \tau(\phi), {\rm Tr}_g\nabla^{\phi}\nabla^{\phi}V-\nabla^{\phi}_{\nabla^M}V-R^N(d \phi,V)d \phi\rangle  v_g.
\end{eqnarray}
Using the symmetry property of the curvature tensor and the divergence theorem we can switch the positions of $V$ and $f\tau(\phi)$ to have
\begin{eqnarray}
\frac{\partial}{\partial t}E_{2,f}(\phi_t)|_{t=0} &=& -\int_{\Omega} \langle V, J^{\phi}(f \tau(\phi))\rangle  v_g.
\end{eqnarray}
It follows that $\phi$ is an $f$-biharmonic map if and only if the $f$-bitension field vanishes identically, i.e., $\tau_{2,f}(\phi)=- J^{\phi}(f \tau(\phi))\equiv 0$. Finally, Using Formula (7) in \cite{Ou1}, we have
\begin{eqnarray}\notag
\tau_{2,f}(\phi)&=&- J^{\phi}(f \tau(\phi))=-\{fJ^{\phi}( \tau(\phi))-(\Delta f)\tau(\phi)-2\nabla^{\phi}_{\rm grad f}\tau(\phi)\}\\
&=& f\tau_2(\phi)+(\Delta f)\tau(\phi)+2\nabla^{\phi}_{{\rm grad}\, f}\tau(\phi),
\end{eqnarray}
from which the $f$-biharmonic map equation (\ref{FBH}) follows.
\end{proof}

It is well known that for $m\ne 2$, the harmonicity and $f$-harmonicity of a map $\phi: (M^m,g)\longrightarrow (N^n, h)$  are related via a conformal change of the domain metric. More precisely, we have 
\begin{proposition}\cite{Li}\label{C2}
A map $\phi: (M^m,g)\longrightarrow (N^n,h)$ with $m\ne 2$ is
$f$-harmonic if and only if $\phi:
(M^m,f^{\frac{2}{m-2}}g)\longrightarrow (N^n,h)$ is a harmonic map.
\end{proposition}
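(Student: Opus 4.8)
The plan is to reduce the statement to the classical conformal transformation law for the tension field of a map under a change of the \emph{domain} metric, and then to choose the conformal factor so that the resulting harmonic map equation becomes exactly the $f$-harmonic map equation (\ref{fhm}). First I would recall that if $\bar g=e^{2u}g$ is a metric conformal to $g$ on $M^m$, with $u\in C^\infty(M)$, then the Levi-Civita connections are related by
\[
\bar\nabla_XY=\nabla_XY+X(u)\,Y+Y(u)\,X-g(X,Y)\,{\rm grad}\,u ,
\]
where ${\rm grad}$ is taken with respect to $g$. Since the target $(N^n,h)$ and the map $\phi$ are unchanged, only the domain connection entering the second fundamental form $\nabla\,d\phi(X,Y)=\nabla^{\phi}_X d\phi(Y)-d\phi(\nabla_XY)$ is affected, and I would substitute the above to get
\[
\overline{\nabla\,d\phi}(X,Y)=\nabla\,d\phi(X,Y)-X(u)\,d\phi(Y)-Y(u)\,d\phi(X)+g(X,Y)\,d\phi({\rm grad}\,u).
\]

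Next I would take the $\bar g$-trace. Working in a local $g$-orthonormal frame $\{e_i\}$, so that $\bar g^{ij}=e^{-2u}\delta_{ij}$, the three correction terms contribute $-2\,d\phi({\rm grad}\,u)$ and $+m\,d\phi({\rm grad}\,u)$ respectively, yielding the key identity
\[
\bar\tau(\phi)=e^{-2u}\bigl[\tau(\phi)+(m-2)\,d\phi({\rm grad}\,u)\bigr],
\]
where $\bar\tau(\phi)={\rm Tr}_{\bar g}\,\overline{\nabla\,d\phi}$ is the tension field of $\phi\colon(M^m,\bar g)\to(N^n,h)$. This is where the hypothesis $m\neq 2$ is essential: the factor $(m-2)$ must be nonzero for the gradient term to survive (for $m=2$ it drops out, recovering the conformal invariance of harmonic maps from surfaces, and the reduction cannot work).

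Finally I would specialize the conformal factor. Setting $u=\frac{1}{m-2}\log f$ gives $e^{2u}=f^{\frac{2}{m-2}}$ and ${\rm grad}\,u=\frac{1}{(m-2)f}\,{\rm grad}\,f$, so the identity above becomes
\[
\bar\tau(\phi)=f^{-\frac{2}{m-2}}\Bigl[\tau(\phi)+\tfrac{1}{f}\,d\phi({\rm grad}\,f)\Bigr]=f^{-\frac{2}{m-2}}\,\tfrac{1}{f}\bigl[f\tau(\phi)+d\phi({\rm grad}\,f)\bigr]=f^{-\frac{2}{m-2}}\,\tfrac{1}{f}\,\tau_f(\phi).
\]
Since $f>0$, the prefactor $f^{-\frac{2}{m-2}}/f$ never vanishes, so $\bar\tau(\phi)\equiv 0$ holds if and only if $\tau_f(\phi)\equiv 0$. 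That is, $\phi\colon(M^m,f^{\frac{2}{m-2}}g)\to(N^n,h)$ is harmonic precisely when $\phi\colon(M^m,g)\to(N^n,h)$ is $f$-harmonic, which is the assertion.

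I expect the only real work to be the careful derivation of the trace identity for $\bar\tau(\phi)$, in particular bookkeeping the coefficient $(m-2)$ and the conformal scaling $e^{-2u}$ of the trace; once that identity is in hand, the choice $u=\frac{1}{m-2}\log f$ and the positivity of $f$ make the equivalence immediate.
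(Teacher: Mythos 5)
Your proof is correct. The paper does not actually prove this proposition --- it is quoted with a citation to Lichnerowicz \cite{Li} --- and the argument you give is precisely the standard one underlying that citation: the conformal transformation law $\bar\tau(\phi)=e^{-2u}\bigl[\tau(\phi)+(m-2)\,d\phi({\rm grad}\,u)\bigr]$ for the domain metric $\bar g=e^{2u}g$, followed by the choice $u=\tfrac{1}{m-2}\log f$, which turns the bracket into $\tfrac{1}{f}\tau_f(\phi)$. Your bookkeeping of the trace (the $-2\,d\phi({\rm grad}\,u)$ and $+m\,d\phi({\rm grad}\,u)$ contributions and the overall factor $e^{-2u}$) is right, the role of the hypothesis $m\neq 2$ is correctly identified, and the positivity of $f$ justifies the final equivalence, so the argument is complete.
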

In general, this does not generalize to the case of the relationship between biharmonicity and $f$-biharmonicity, but very interestingly, we have
\begin{theorem}\label{CONF}
A map $\phi: (M^2, g)\longrightarrow (N^n, h)$ is
an $f$-biharmonic map if and only if $\phi:
(M^2, f^{-1}g)\longrightarrow (N^n,h)$ is a biharmonic map. 
\end{theorem}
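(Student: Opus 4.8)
The plan is to exploit the fact that in domain dimension $m=2$ both the tension field and the Jacobi operator are conformally covariant, so that biharmonicity with respect to a conformal metric is governed by an overall conformal factor. Write the target metric as $\tilde g=f^{-1}g=e^{2\lambda}g$, so that $e^{-2\lambda}=f$. I would first record the transformation law of the tension field under a conformal change of the domain metric in dimension $m$,
\begin{equation}\notag
\tilde\tau(\phi)=e^{-2\lambda}\bigl(\tau(\phi)+(m-2)\,d\phi({\rm grad}\,\lambda)\bigr),
\end{equation}
which follows by substituting the conformal change of the Levi-Civita connection $\tilde\nabla^M_XY=\nabla^M_XY+d\lambda(X)Y+d\lambda(Y)X-g(X,Y){\rm grad}\,\lambda$ into $\tilde\tau(\phi)={\rm Tr}_{\tilde g}\tilde\nabla\,d\phi$ and tracing with $\tilde g=e^{2\lambda}g$. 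The decisive point is that the first-order correction carries the factor $(m-2)$, so at $m=2$ it drops out and $\tilde\tau(\phi)=f\,\tau(\phi)$.

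The main step, and the one requiring the most care, is the analogous transformation law for the Jacobi operator $J^\phi$, i.e. for the operator $V\mapsto {\rm Tr}_g(\nabla^\phi\nabla^\phi-\nabla^\phi_{\nabla^M})V-{\rm Tr}_g R^N(d\phi,V)d\phi$ acting on sections of $\phi^{-1}TN$. Since the pullback connection $\nabla^\phi$ is independent of the domain metric, only the trace and the domain connection $\nabla^M$ change. I would compute in a local $g$-orthonormal frame $\{e_i\}$ together with the $\tilde g$-orthonormal frame $\{\tilde e_i=e^{-\lambda}e_i\}$; a direct calculation of ${\rm Tr}_{\tilde g}\nabla^\phi\nabla^\phi V$ and of ${\rm Tr}_{\tilde g}\nabla^\phi_{\tilde\nabla^M}V$ (again inserting the conformal change of $\nabla^M$) yields
\begin{equation}\notag
\tilde J^\phi(V)=e^{-2\lambda}\Bigl(J^\phi(V)-(m-2)\,\nabla^\phi_{{\rm grad}\,\lambda}V\Bigr),
\end{equation}
while the curvature term transforms by the bare factor $e^{-2\lambda}$. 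Exactly as for the tension field, the extra first-order term carries the coefficient $(m-2)$ and vanishes precisely when $m=2$, giving $\tilde J^\phi(V)=f\,J^\phi(V)$. The bookkeeping of the $\lambda$-gradient terms arising separately from $\nabla^\phi\nabla^\phi$ and from $\nabla^\phi_{\tilde\nabla^M}$ is the only genuinely delicate part; everything hinges on their combining into the single coefficient $(m-2)$.

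With these two covariance facts in hand the conclusion is immediate. The bitension field with respect to $\tilde g$ is $\tilde\tau_2(\phi)=-\tilde J^\phi(\tilde\tau(\phi))$, so in dimension $2$,
\begin{equation}\notag
\tilde\tau_2(\phi)=-\tilde J^\phi\bigl(f\,\tau(\phi)\bigr)=-f\,J^\phi\bigl(f\,\tau(\phi)\bigr)=f\,\tau_{2,f}(\phi),
\end{equation}
where the last equality is precisely the identity $\tau_{2,f}(\phi)=-J^\phi(f\,\tau(\phi))$ established in the proof of the first theorem of this section. Since $f$ is a positive function, $\tilde\tau_2(\phi)=0$ if and only if $\tau_{2,f}(\phi)=0$, which is the asserted equivalence: $\phi:(M^2,f^{-1}g)\to(N^n,h)$ is biharmonic exactly when $\phi:(M^2,g)\to(N^n,h)$ is $f$-biharmonic.
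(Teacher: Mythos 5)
Your proof is correct, and it takes a genuinely different route from the paper's. The paper does not derive any conformal covariance from scratch: it rewrites the $f$-biharmonic equation in logarithmic form, dividing by $f$ so that the coefficients become $\Delta\ln f+|{\rm grad}\ln f|^2$ and $2\nabla^\phi_{{\rm grad}\ln f}$, and then quotes a ready-made transformation formula (Corollary 1 of \cite{Ou2}) for the bitension field under a conformal change $\bar g=F^{-2}g$ of a two-dimensional domain; the theorem then follows by matching the two displayed equations term by term after setting $F^2=f$. You instead factor the whole argument through the Jacobi operator: you establish the two covariance statements $\tilde\tau(\phi)=f\,\tau(\phi)$ and $\tilde J^\phi=f\,J^\phi$ in dimension two (both of which I checked --- the $(m-2)$ coefficients in your general-$m$ formulas are right, including the bookkeeping of the gradient terms coming from the rescaled frame versus the conformally changed $\nabla^M$), and then conclude via the identity $\tau_{2,f}(\phi)=-J^\phi(f\,\tau(\phi))$ from the first variation computation, giving $\tilde\tau_2(\phi)=f\,\tau_{2,f}(\phi)$ directly. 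What your approach buys is self-containedness and transparency: no external citation is needed, the equivalence becomes essentially formal once the two covariance facts are in place, no expansion into logarithmic derivatives is ever required, and the role of dimension two is isolated cleanly in the vanishing of the $(m-2)$ corrections. What the paper's approach buys is brevity (given the literature) and an explicit intermediate equation, namely the transformed $f$-biharmonic equation (\ref{2D}), which is reusable elsewhere; indeed the paper's quoted formula from \cite{Ou2} is essentially the composite of your two covariance laws. One small point of care in your write-up: your identity $\tilde\tau_2(\phi)=-\tilde J^\phi(\tilde\tau(\phi))$ silently uses that the pullback connection and the curvature term are metric-independent so that only the trace and $\nabla^M$ change, which you do state; it would be worth also remarking that the sign convention for $J^\phi$ is immaterial here since every term scales by the same factor.
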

\begin{proof}
On the one hand, we notice that the map $\phi: (M^2, g)\longrightarrow (N^n, h)$ is
an $f$-biharmonic map if and only if
\begin{equation}
f \tau_2(\phi, g)+(\Delta f)\tau(\phi, g)+2\nabla^{\phi}_{{\rm grad} f}\tau(\phi, g)=0,
\end{equation}
which is equivalent to
\begin{equation}\label{2D}
\tau_2(\phi, g)+(\Delta \ln f +|\rm grad \ln f|^2))\tau(\phi, g)+2\nabla^{\phi}_{{\rm grad} \ln f}\tau(\phi, g)=0.
\end{equation}
On the other hand, by Corollary 1 in \cite{Ou2}, the relationship between the bitension field $\tau_2(\phi, g)$ and that of the map $\phi: (M^2, \bar g= F^{-2}g)\longrightarrow (N^n, h)$ is given by
\begin{eqnarray}\notag
\tau_{2}(\phi,{\bar g})= F^4\{\tau^{2}(\phi, g) +
2(\Delta {\rm ln}F+2\left|{\rm grad\,ln}F\right|^2)\tau(\phi,
g)+4\nabla^{ \phi}_{{\rm grad\,ln}\,F}\,\tau(\phi,
g)\},
\end{eqnarray}
which is equivalent to
\begin{eqnarray}\notag
\tau_{2}(\phi,{\bar g})= F^4\{\tau^{2}(\phi, g) +
(\Delta {\rm ln}F^2+\left|{\rm grad\,ln}F^2\right|^2)\tau(\phi,
g)+2\nabla^{ \phi}_{{\rm grad\,ln}\,F^2}\,\tau(\phi, g)\}
\end{eqnarray}
It follows that the map $\phi: (M^2, \bar g= F^{-2}g)\longrightarrow (N^n, h)$ is biharmonic if and only if
\begin{eqnarray}\label{CF}
\tau_{2}(\phi, g) +(\Delta {\rm ln}F^2+\left|{\rm grad\,ln}F^2\right|^2)\tau(\phi, g)+2\nabla^{ \phi}_{{\rm grad\,ln}\,F^2}\,\tau(\phi, g)=0.
\end{eqnarray}
Substituting $F^2=f$ into Equation (\ref{CF}) we obtain Equation (\ref{2D}). Thus, we conclude that the map $\phi: (M^2, g)\longrightarrow (N^n, h)$ is
an $f$-biharmonic map if and only if $\phi:
(M^m, f^{-1}g)\longrightarrow (N^n,h)$ is a biharmonic map. This gives the theorem.
\end{proof}
\begin{theorem}\label{NPC}
Any $f$-biharmonic map $\phi: (M^m, g)\longrightarrow (N^n, h)$ from a compact Riemannian manifold into a non-positively curved manifold with constant $f$-bienergy density (i.e., $f|\tau(\phi)|^2=C$) is a harmonic map.
\end{theorem}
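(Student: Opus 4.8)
The plan is to run the Bochner-type integral argument that proves Jiang's theorem in the pure biharmonic case, but to feed the constant-density hypothesis in through the auxiliary function $f|\tau(\phi)|^2$. Throughout I assume, as is standard for $f$-(bi)harmonic maps, that $f>0$, and I fix the sign conventions $\Delta={\rm div}\,{\rm grad}$ on functions (so that $\int_M\Delta f\,v_g=0$ by compactness) and write $\Delta^{\phi}\tau(\phi)={\rm Tr}_g(\nabla^{\phi}\nabla^{\phi}-\nabla^{\phi}_{\nabla^M})\tau(\phi)$ for the rough Laplacian, so that by (\ref{BTF}) one has $\tau_2(\phi)=\Delta^{\phi}\tau(\phi)-{\rm Tr}_gR^N(d\phi,\tau(\phi))d\phi$. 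First I would take the inner product of the $f$-biharmonic equation (\ref{FBH}) with $\tau(\phi)$. Using $2\langle\nabla^{\phi}_{{\rm grad}\,f}\tau(\phi),\tau(\phi)\rangle=\langle{\rm grad}\,f,{\rm grad}\,|\tau(\phi)|^2\rangle$ and abbreviating the curvature term by $\mathcal{R}:=\langle{\rm Tr}_gR^N(d\phi,\tau(\phi))d\phi,\tau(\phi)\rangle$, this gives the pointwise relation
\[
f\langle\Delta^{\phi}\tau(\phi),\tau(\phi)\rangle=f\mathcal{R}-(\Delta f)|\tau(\phi)|^2-\langle{\rm grad}\,f,{\rm grad}\,|\tau(\phi)|^2\rangle .
\]

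The second ingredient is the hypothesis itself: since $f|\tau(\phi)|^2\equiv C$ is constant, $\Delta\big(f|\tau(\phi)|^2\big)=0$. Expanding this by the product rule together with the Weitzenb\"ock identity $\tfrac12\Delta|\tau(\phi)|^2=\langle\Delta^{\phi}\tau(\phi),\tau(\phi)\rangle+|\nabla^{\phi}\tau(\phi)|^2$ produces a second equation in which the quantities $(\Delta f)|\tau(\phi)|^2$, $\langle{\rm grad}\,f,{\rm grad}\,|\tau(\phi)|^2\rangle$ and $f\langle\Delta^{\phi}\tau(\phi),\tau(\phi)\rangle$ all reappear. Substituting the relation above into this second equation, the two first-order cross terms cancel exactly, and I am left with the clean pointwise identity
\[
(\Delta f)\,|\tau(\phi)|^2=2f\big(|\nabla^{\phi}\tau(\phi)|^2+\mathcal{R}\big).
\]

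Now the curvature hypothesis enters: because $N$ is non-positively curved, $\mathcal{R}\ge 0$, so the right-hand side is non-negative and hence $\Delta f\ge 0$ wherever $\tau(\phi)\ne 0$. If $C=0$ then $\tau(\phi)\equiv 0$ and $\phi$ is already harmonic; otherwise $|\tau(\phi)|^2=C/f>0$ everywhere, so $\Delta f\ge 0$ on all of $M$. Since $M$ is compact and $\int_M\Delta f\,v_g=0$, a non-negative function with vanishing integral must vanish, forcing $\Delta f\equiv 0$ and hence $f$ constant. Feeding $\Delta f\equiv 0$ back into the identity gives $|\nabla^{\phi}\tau(\phi)|^2\equiv 0$, i.e. $\tau(\phi)$ is parallel. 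I then close the argument with the classical divergence trick: the $1$-form $\theta(X)=\langle d\phi(X),\tau(\phi)\rangle$ satisfies ${\rm div}\,\theta=|\tau(\phi)|^2+\langle d\phi,\nabla^{\phi}\tau(\phi)\rangle=|\tau(\phi)|^2$, so integrating over the compact $M$ yields $\int_M|\tau(\phi)|^2v_g=0$ and therefore $\tau(\phi)\equiv 0$, i.e. $\phi$ is harmonic.

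I expect the main obstacle to be the middle step. The naive route --- pairing (\ref{FBH}) with $\tau(\phi)$ and integrating directly --- only produces the \emph{integral} identity $\tfrac12\int_M(\Delta f)|\tau(\phi)|^2v_g=\int_M f\big(|\nabla^{\phi}\tau(\phi)|^2+\mathcal{R}\big)v_g$, which after inserting $|\tau(\phi)|^2=C/f$ is perfectly consistent with $\tau(\phi)\ne 0$ and does not close. The decisive idea is therefore to use the constant density in its strong \emph{pointwise} form $\Delta\big(f|\tau(\phi)|^2\big)=0$ and to combine it with the paired equation so that the awkward first-order terms $\langle{\rm grad}\,f,{\rm grad}\,|\tau(\phi)|^2\rangle$ cancel; obtaining a pointwise sign on $\Delta f$ rather than merely an integral one is exactly what unlocks the maximum-principle conclusion. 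One must also track the sign conventions for $\Delta$, $\Delta^{\phi}$ and the curvature term carefully, since the positivity $\mathcal{R}\ge 0$ furnished by non-positive curvature is precisely what makes the final inequality point in the right direction.
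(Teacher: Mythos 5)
Your proposal is correct, and its core is the same Bochner-type argument as the paper's: your pointwise identity $(\Delta f)\,|\tau(\phi)|^2=2f\bigl(|\nabla^{\phi}\tau(\phi)|^2+\mathcal{R}\bigr)$ is precisely the paper's formula (\ref{GD18}) with the left-hand side killed by the constant-density hypothesis (note your $\mathcal{R}=-R^N(d\phi(e_i),\tau(\phi),d\phi(e_i),\tau(\phi))$ in the paper's four-tensor convention, and both are $\ge 0$ under non-positive curvature), and both arguments then conclude that $f$ is subharmonic, hence constant, on the compact manifold. The differences lie in packaging and in the endgame. The paper derives (\ref{GD18}) without assuming constancy, by applying the rough Laplacian to $\frac{1}{2}\langle f^{1/2}\tau(\phi),f^{1/2}\tau(\phi)\rangle$ and substituting the $f$-biharmonic equation; you impose $\Delta\bigl(f|\tau(\phi)|^2\bigr)=0$ from the start and reach the same identity via the product rule plus the Weitzenb\"ock formula --- equivalent computations. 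More substantively, once $f$ is constant the paper simply cites Jiang's theorem (a biharmonic map from a compact manifold into a non-positively curved manifold is harmonic), whereas you reprove that step inline: the identity with $\Delta f\equiv 0$ forces $\nabla^{\phi}\tau(\phi)\equiv 0$, and the divergence of $\theta(X)=\langle d\phi(X),\tau(\phi)\rangle$ then gives $\int_M|\tau(\phi)|^2v_g=0$. This makes your proof self-contained (it is, in effect, Jiang's own argument specialized to parallel tension field). You are also more careful than the paper on one point: the identity only yields $\Delta f\ge 0$ where $\tau(\phi)\ne 0$, so your case split --- $C=0$ gives harmonicity outright, while $C\ne 0$ gives $|\tau(\phi)|^2=C/f>0$ everywhere --- is exactly what is needed to legitimately assert subharmonicity of $f$ on all of $M$, a point the paper's proof glosses over.
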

\begin{proof}
A straightforward computation gives
\begin{eqnarray}\notag
\Delta\left (\frac{1}{2}f|\tau(\phi)|^2\right)&=&\frac{1}{2}\Delta\langle f^{1/2}\tau(\phi), \;f^{1/2}\tau(\phi)\rangle=(\nabla^{\phi}_{e_1}\nabla^{\phi}_{e_1}-\nabla^{\phi}_{\nabla^M_{e_i}e_i})\langle f^{1/2}\tau(\phi), \;f^{1/2}\tau(\phi)\rangle\\\notag
&=&\langle \nabla^{\phi}_{e_i} f^{1/2}\tau(\phi), \; \nabla^{\phi}_{e_i} f^{1/2}\tau(\phi)\rangle+\langle (\nabla^{\phi}_{e_i}\nabla^{\phi}_{e_i}-\nabla^{\phi}_{\nabla^M_{e_i}e_i} f^{1/2}\tau(\phi), \; f^{1/2}\tau(\phi)\rangle\\\label{GD9}
&=&\langle \nabla^{\phi}_{e_i} f^{1/2}\tau(\phi), \; \nabla^{\phi}_{e_i} f^{1/2}\tau(\phi)\rangle+f\langle (\nabla^{\phi}_{e_i}\nabla^{\phi}_{e_i}-\nabla^{\phi}_{\nabla^M_{e_i}e_i}\tau(\phi), \; \tau(\phi)\rangle+\\\notag
&+&f^{1/2}(\Delta f^{1/2})|\tau(\phi)|^2+2f^{1/2}\langle\nabla^{\phi}_{{\rm grad}f^{1/2}}\tau(\phi), \tau(\phi)\rangle.
\end{eqnarray}
Since $\phi$ is assumed to be $f$-biharmonic we have
\begin{eqnarray}\label{GD10}
&& f\langle (\nabla^{\phi}_{e_i}\nabla^{\phi}_{e_i}-\nabla^{\phi}_{\nabla^M_{e_i}e_i}\tau(\phi), \; \tau(\phi)\rangle\\\notag=&&\langle f R^N(d\phi(e_i), \tau(\phi))d\phi(e_i) -(\Delta f )\tau(\phi)-2\nabla^{\phi}_{{\rm grad}f}\tau(\phi), \tau(\phi)\rangle\\\notag
=&& f\langle R^N(d\phi(e_i), \tau(\phi))d\phi(e_i) , \tau(\phi)\rangle-(\Delta f )|\tau(\phi)|^2-2\langle\nabla^{\phi}_{{\rm grad}f}\tau(\phi), \tau(\phi)\rangle.
\end{eqnarray}
Substituting (\ref{GD10}) into (\ref{GD9}) and simplifying the result gives
\begin{eqnarray}\label{GD18}
&&\Delta\left (\frac{1}{2}f|\tau(\phi)|^2\right)\\\notag && = f|\nabla^{\phi}_{e_i}\tau(\phi)|^2-f R^N(d\phi(e_i), \tau(\phi), d\phi(e_i) , \tau(\phi))-\frac{1}{2}(\Delta f) |\tau(\phi)|^2. 
\end{eqnarray}
Using Equation (\ref{GD18}), together with the assumptions that $f|\tau(\phi)|^2=C$, $f> 0$, and $R^N(d\phi(e_i), \tau(\phi), d\phi(e_i) , \tau(\phi))\le 0$, we conclude that $f$ is a subharmonic function on the compact manifold $(M, g)$ and hence $f$ is a constant function. It follows that the $f$-biharmonic maps $\phi$ is actually a biharmonic map from a compact manifold into a nonpositively curved manifold which has been proved to be a harmonic map by a theorem of Jiang in (\cite{Ji1}). Thus, we obtain the theorem. 
\end{proof}
\begin{remark}
Note that there are many harmonic maps between spheres with constant energy density (called eigenmaps). As our Theorem \ref{NPC} implies that there is no proper $f$-biharmonic maps from a compact manifold into a nonpositively curved manifold with constant $f$-bienergy density, it would be interesting to know if there is any proper $f$-biharmonic map between spheres with constant $f$-bienergy density.
\end{remark}
\begin{proposition}\label{P3}
A function $u:(M,g)\longrightarrow \r$ is $f$-biharmonic if and only if 
\begin{eqnarray}\label{FBHF}
&& f\Delta^2 u+(\Delta f) \Delta u+2g({\rm grad} f, {\rm grad}\Delta u)=0, \;{\rm or,\; equivalently}\\\label{FBHF2}
&&\Delta (f\Delta u)=0,
\end{eqnarray}
where $\Delta^2 u=\Delta(\Delta u)$ denotes the bi-Laplacian of $u$. In other words, a function $u$ is an $f$-biharmonic function if and only if the product $f\Delta u$ is a harmonic function. In particular, a quasi-harmonic function $u$ (i.e., a function $u: (M,g)\longrightarrow \r$ with 
$\Delta u={\rm constant}\ne 0$) is an $f$-biharmonic function if and only if $f:(M,g)\longrightarrow \r$ is a harmonic function.
\end{proposition}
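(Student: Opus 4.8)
The plan is to obtain both formulations by directly specializing the general $f$-biharmonic map equation (\ref{FBH}) to the case of a real-valued map $u:(M,g)\longrightarrow \r$, where the target carries the flat metric. The two features that make the target into a trivial geometry are exactly what simplifies everything: the target curvature vanishes, $R^N\equiv 0$, and the pullback bundle $u^{-1}T\r$ is trivial, so that $\nabla^{u}$ acts on $\r$-valued sections as ordinary directional differentiation of functions.

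First I would identify the relevant fields. For $\phi=u$ the tension field is the scalar $\tau(u)={\rm Tr}_g\nabla\,du=\Delta u$, a function on $M$. Because $\nabla^{u}$ is just ordinary differentiation and $R^N=0$, the bitension field collapses to the bi-Laplacian: $\tau_2(u)={\rm Tr}_g(\nabla^{u}\nabla^{u}-\nabla^{u}_{\nabla^{M}})\tau(u)=\Delta(\Delta u)=\Delta^2 u$, while the curvature term drops out entirely. Likewise $\nabla^{u}_{{\rm grad}\,f}\tau(u)=({\rm grad}\,f)(\Delta u)=g({\rm grad}\,f,{\rm grad}\,\Delta u)$. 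Substituting these three ingredients into (\ref{FBH}) and using $f>0$ yields precisely Equation (\ref{FBHF}).

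To pass to the factored form (\ref{FBHF2}), I would invoke the Leibniz rule for the Laplacian of a product, $\Delta(fh)=(\Delta f)h+f\Delta h+2g({\rm grad}\,f,{\rm grad}\,h)$, applied with $h=\Delta u$. This gives
\[
\Delta(f\Delta u)=(\Delta f)\Delta u+f\Delta^2 u+2g({\rm grad}\,f,{\rm grad}\,\Delta u),
\]
whose right-hand side is term for term the left-hand side of (\ref{FBHF}). Hence (\ref{FBHF}) holds if and only if $\Delta(f\Delta u)=0$, i.e. $f\Delta u$ is a harmonic function, which is the asserted equivalence. The concluding special case is then immediate: if $u$ is quasi-harmonic with $\Delta u=c$ a nonzero constant, then $f\Delta u=cf$, so $\Delta(f\Delta u)=c\,\Delta f$ vanishes if and only if $\Delta f=0$, i.e. $f$ is harmonic.

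I do not anticipate a genuine obstacle here: the argument is a clean specialization of (\ref{FBH}) followed by a single product-rule identity. The only point demanding care is bookkeeping consistency in the sign and trace conventions for $\Delta={\rm Tr}_g\nabla\,d$ and for the tension and bitension fields, so that the reductions $\tau(u)=\Delta u$, $\tau_2(u)=\Delta^2 u$ and the Leibniz rule all carry the same normalization. Once these are fixed as in the general equation (\ref{FBH}), the two displayed expressions coincide exactly and the equivalence follows.
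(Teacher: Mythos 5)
Your proposal is correct and follows essentially the same route as the paper: specialize the $f$-biharmonic map equation (\ref{FBH}) to a real-valued map using $\tau(u)=\Delta u$ and $\tau_2(u)=\Delta^2 u$ (the curvature term vanishing since $\r$ is flat), then identify the resulting expression with $\Delta(f\Delta u)$ via the product rule for the Laplacian. The paper's proof is merely terser, leaving the Leibniz-rule identification and the quasi-harmonic special case as ``a further computation,'' which you have carried out explicitly.
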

\begin{proof}
A straightforward computation gives the tension and the bitension fields of $u:(M,g)\longrightarrow \r$ as
\begin{equation}
\tau(u)=(\Delta u)\frac{\partial}{\partial t},\; {\rm and}\;\; \tau_2(u)=(\Delta^2 u)\frac{\partial}{\partial t}.
\end{equation}
Substituting these into $f$-biharmonic map equation (\ref{FBH}) and performing a further computation we obtain the $f$-biharmonic function equation (\ref{FBHF}). The last statement follows from Equation (\ref{FBHF}).
\end{proof}
\begin{corollary}\label{C25}
Any $f$-biharmonic function on a compact manifold $(M, g)$ is a constant function.
\end{corollary}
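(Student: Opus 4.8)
The plan is to read the conclusion straight off the characterization established in Proposition \ref{P3}, and then eliminate the resulting integration constant using compactness. By Proposition \ref{P3}, a function $u:(M,g)\longrightarrow\r$ is $f$-biharmonic if and only if $f\Delta u$ is a harmonic function, i.e. $\Delta(f\Delta u)=0$. The first thing I would invoke is the standard fact that on a compact manifold (without boundary) every harmonic function is constant: if $\Delta w=0$, then Green's identity gives
\[
\int_M |{\rm grad}\,w|^2\,v_g=-\int_M w\,\Delta w\,v_g=0,
\]
so ${\rm grad}\,w=0$ and $w$ is constant. Applying this to $w=f\Delta u$, the $f$-biharmonicity of $u$ forces $f\Delta u=c$ for some constant $c$.

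The next step is to show that this constant vanishes, and this is the one point that requires a little care. Since $\Delta u={\rm div}({\rm grad}\,u)$, the divergence theorem on the compact manifold $M$ yields $\int_M \Delta u\,v_g=0$. Rewriting $f\Delta u=c$ as $\Delta u=c/f$ (which is legitimate because $f>0$) and integrating gives
\[
0=\int_M \Delta u\,v_g = c\int_M \frac{1}{f}\,v_g .
\]
As $f>0$, the integral $\int_M f^{-1}\,v_g$ is strictly positive, so necessarily $c=0$.

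Finally, $c=0$ means $\Delta u=0$, so $u$ is itself a harmonic function on the compact manifold $M$; by the same Green's-identity argument used above, $u$ must be constant, which is the assertion of the corollary.

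I do not expect any genuine obstacle here, since the whole statement is a short deduction from Proposition \ref{P3}. The only delicate point is the elimination of $c$: it relies essentially on the positivity hypothesis $f>0$ together with the vanishing of $\int_M\Delta u\,v_g$, and without $f>0$ the conclusion could fail. Accordingly, I would be careful to invoke $f>0$ explicitly at the step where I pass from $f\Delta u=c$ to $\Delta u=c/f$ and conclude $c\int_M f^{-1}\,v_g=0$.
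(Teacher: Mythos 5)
Your proof is correct, and it shares the paper's skeleton: invoke Proposition \ref{P3} to rewrite $f$-biharmonicity as $\Delta(f\Delta u)=0$, use the fact that harmonic functions on a compact manifold are constant to obtain $f\Delta u=c$, and then exploit the positivity of $f$. Where you genuinely diverge is in how the constant $c$ is eliminated. The paper splits into cases: if $c=0$ then $\Delta u=0$ and $u$ is constant; if $c\ne 0$ then $\Delta u=c/f$ has a fixed sign, so $u$ is subharmonic or superharmonic on a compact manifold and is therefore constant by the maximum-principle-type fact the paper cites. You instead integrate $\Delta u = c/f$ over $M$ and use $\int_M \Delta u\,v_g=0$ (divergence theorem on a closed manifold) to get $c\int_M f^{-1}\,v_g=0$, hence $c=0$, after which $u$ is harmonic and thus constant. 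Your route has a mild advantage: it shows outright that the case $c\ne 0$ cannot occur, whereas the paper's case analysis concludes in the $c\ne 0$ case that $u$ is constant --- which would force $\Delta u=0$ and contradict $c\ne 0$, so that case is in fact vacuous --- and your argument needs only the divergence theorem rather than the additional fact about sub/superharmonic functions. Both are rigorous; yours simply trades one classical ingredient for another and avoids the case split.
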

\begin{proof}
By Proposition \ref{P3}, $u$ is an $f$-biharmonic function if and only if $f\Delta u$ is a harmonic function. By the well known fact that any harmonic function on a compact manifold is constant we have $f\Delta u=C$, and hence 
\begin{equation}\label{CPT}
\Delta u=\frac{C}{f}
\end{equation}
since $f>0$ by our assumption. If $C=0$, then we have $\Delta u=0$ and hence $u$ is a harmonic function, so $u$ is a constant function in this case. If $C\ne 0$, then (\ref{CPT}) implies that $u$ is either a subharmonic or a superharmonic function since $f$ has a fixed sign with $f>0$. Again, the well-known fact that a subharmonic or superharmonic function on a compact manifold is constant implies that $u$ is constant. This completes the proof of the corollary.
\end{proof}
\begin{example}
Let $f:\r^3\setminus\{0\}\longrightarrow \r$ be the function $f(x, y, z)=1/\sqrt{x^2+y^2+z^2}$ and let $u:\r^3\setminus\{0\}\longrightarrow \r$ be the function given by $u(x,y,z)=x^2+y^2+z^2$. It is easily checked that $\Delta f=0,\;\;\Delta u=6$ and $\Delta^2 u=0$ and hence $f$ and $u$ satisfy Equation (\ref{FBHF}). So, $u(x,y,z)=x^2+y^2+z^2$ is an $f$-biharmonic function on $\r^3\setminus\{0\}$ for $f(x, y, z)=1/\sqrt{x^2+y^2+z^2}$. Clearly, this $f$-biharmonic function $u$ is not a harmonic function.\\
\end{example}
\begin{example}
Let $f, u :\r^3\setminus\{0\}\longrightarrow \r$ be the functions defined by $f(x, y, z)=\sqrt{x^2+y^2+z^2}$ and let $u(x, y, z)=\frac{x}{x^2+y^2+z^2}$. Then, we can check (see also Proposition \ref{INVERSE}) that $u$ is a proper $f$-biharmonic function which is neither harmonic nor biharmonic. \\
\end{example}
\begin{corollary}\label{RR}
Let $f, u:\r\longrightarrow \r$ be two functions with $f(x)>0, \;\forall\, x\in \r$. Then, $u$ is an $f$-biharmonic function if and only if
\begin{equation}\label{Ex2}
u(x)=\int\int\frac{Ax+B}{f}\,dxdx+Cx+D,
\end{equation}
where $A, B, C, D$ are arbitrary constants. In particular,
\begin{itemize}
\item[(1)] For $f(x)=1+x^2$, a function $u:\r\longrightarrow \r$ is $f$-biharmonic if and only if $u(x)=\frac{1}{2}(Ax-B)\ln (1+x^2)+(Bx+A)\arctan x +(C-A)x+D$, where $A, B, C, D$ are constants; and
\item[(2)] For, $f(x)=e^{-x}$, a function $u:\r\longrightarrow \r$ is $f$-biharmonic if and only if $u(x)=(Ax-2A+B)e^x+Cx +D$, where $A, B, C, D$ are constants.
\end{itemize}
\end{corollary}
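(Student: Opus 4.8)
The plan is to reduce the statement to an elementary ordinary differential equation by specializing the characterization already obtained in Proposition \ref{P3}. Since $u$ is a function on $\r$, its tension field is $\tau(u)=u''\,\partial/\partial t$, so that $\Delta u = u''$, and the equivalent form (\ref{FBHF2}) of the $f$-biharmonic equation reads
\begin{equation}\notag
\Delta(f\Delta u) = (fu'')'' = 0.
\end{equation}
Thus $u$ is $f$-biharmonic if and only if $fu''$ is an affine function of $x$, i.e.\ $fu''=Ax+B$ for some constants $A,B$. Because $f>0$ everywhere this is the same as $u''=(Ax+B)/f$, and integrating twice with respect to $x$ introduces the two further constants $C,D$ and yields precisely the claimed formula (\ref{Ex2}). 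Conversely, any $u$ of the form (\ref{Ex2}) satisfies $fu''=Ax+B$, whose second derivative vanishes, so the equivalence is immediate.

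For the two special cases I would simply substitute the given $f$ into the double integral and carry out the quadratures explicitly. For $f(x)=1+x^2$ the first integration gives $\int (Ax+B)/(1+x^2)\,dx = \tfrac{A}{2}\ln(1+x^2)+B\arctan x$ up to a constant; integrating once more, using integration by parts on both the logarithmic and the arctangent terms (together with $\int x^2/(1+x^2)\,dx = x-\arctan x$ and $\int x/(1+x^2)\,dx=\tfrac12\ln(1+x^2)$) and then collecting the coefficients of $\ln(1+x^2)$, of $\arctan x$, and of the linear term, reproduces the stated expression $\tfrac12(Ax-B)\ln(1+x^2)+(Bx+A)\arctan x+(C-A)x+D$. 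For $f(x)=e^{-x}$ the integrand becomes $(Ax+B)e^x$; one antiderivative is $(Ax+B-A)e^x$, and integrating a second time (again by parts) gives $(Ax-2A+B)e^x$, so that adding the affine term $Cx+D$ yields the advertised formula.

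There is no genuine conceptual obstacle here: once Proposition \ref{P3} is in hand, the statement is just the general solution of $(fu'')''=0$ followed by two explicit integrations. The only place demanding care is the bookkeeping in part (1), where the linear term $-Ax$ produced by integrating $\tfrac{A}{2}\ln(1+x^2)$ must be absorbed into the free constant so that the final coefficient of $x$ reads $(C-A)$ rather than $C$; keeping track of which constants remain genuinely free after the two integrations is the main (and modest) point to watch.
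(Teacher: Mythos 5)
Your proposal is correct and follows exactly the paper's route: the paper likewise invokes Proposition \ref{P3} to reduce the problem to $(f\,u'')''=0$, obtains (\ref{Ex2}) by two integrations, and dispatches cases (1) and (2) by elementary quadratures. Your explicit verification of the integrations (including the absorption of the $-Ax$ term into the constant, giving the coefficient $C-A$) fills in details the paper leaves to the reader, but the argument is the same.
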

\begin{proof}
In this case, the $f$-biharmonic equation (\ref{FBHF2}) reduces to $(f\,u'')''=0$ which has solution (\ref{Ex2}). Finally (1) and (2) are obtained by elementary integrations.
\end{proof}
\begin{remark}
It is easily checked that for $A\ne 0, B\ne 0$ the function $u(x)=(Ax-2A+B)e^x)+Cx +D$ is neither a harmonic nor a biharmonic function, so it provides many examples of proper $f$-biharmonic functions.
\end{remark}
\begin{theorem}\label{TE}
Any $f$-biharmonic map $\phi: (M^m, g)\longrightarrow \r^n$ from a compact manifold into a Euclidean space is a constant map.
\end{theorem}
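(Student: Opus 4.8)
My plan is to exploit the fact that the target $\r^n$ is flat, which makes the $f$-biharmonic map equation decouple into $n$ scalar equations, one for each component of $\phi$, each of which is precisely the $f$-biharmonic \emph{function} equation of Proposition \ref{P3}. Having reduced to the scalar case, I can invoke Corollary \ref{C25}.

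First I would write $\phi=(\phi^1,\dots,\phi^n)$ with $\phi^\alpha:(M^m,g)\longrightarrow\r$, and work in the standard parallel frame $\{\partial/\partial y^\alpha\}$ of $T\r^n$. Since the Euclidean connection is flat and this frame is parallel, the pullback connection $\nabla^\phi$ acts on a section $V=V^\alpha\,\partial/\partial y^\alpha$ of $\phi^{-1}T\r^n$ just by differentiating components, $\nabla^\phi_X V=(XV^\alpha)\,\partial/\partial y^\alpha$. Consequently the tension and bitension fields are computed componentwise exactly as in the proof of Proposition \ref{P3}: one has $\tau(\phi)=(\Delta\phi^\alpha)\,\partial/\partial y^\alpha$, and because $R^N=0$ the curvature term in $\tau_2$ drops out, giving $\tau_2(\phi)=(\Delta^2\phi^\alpha)\,\partial/\partial y^\alpha$.

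Next I would substitute these into the $f$-biharmonic map equation (\ref{FBH}) and read off the $\partial/\partial y^\alpha$-component. For each $\alpha$ this yields
$$ f\,\Delta^2\phi^\alpha+(\Delta f)\,\Delta\phi^\alpha+2\,g(\mathrm{grad}\,f,\,\mathrm{grad}\,\Delta\phi^\alpha)=0, $$
which is identically Equation (\ref{FBHF}) with $u=\phi^\alpha$. Hence every component $\phi^\alpha$ is an $f$-biharmonic function on the compact manifold $(M,g)$. By Corollary \ref{C25} each $\phi^\alpha$ is constant, so $\phi$ is a constant map, which is the assertion of Theorem \ref{TE}.

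I do not expect a genuine obstacle in this argument. The only point requiring care is the bookkeeping that shows the map equation really decouples, i.e. verifying that the flatness of $\r^n$ simultaneously annihilates the curvature term of $\tau_2(\phi)$ and trivializes $\nabla^\phi$ in the Cartesian frame, so that no cross-terms between distinct components $\phi^\alpha,\phi^\beta$ appear; once this is granted, the reduction to the scalar case and the appeal to Corollary \ref{C25} are immediate. (One could alternatively try to argue directly from the Bochner-type identity (\ref{GD18}) with $R^N=0$, but there the sign of the term $-\tfrac12(\Delta f)|\tau(\phi)|^2$ is not controlled, so the componentwise reduction is the cleaner route.)
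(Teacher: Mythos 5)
Your proposal is correct and follows essentially the same route as the paper's own proof: writing $\phi=(\phi^1,\dots,\phi^n)$, using flatness of $\r^n$ to reduce the $f$-biharmonic map equation (\ref{FBH}) to the scalar equation (\ref{FBHF}) for each component, and then invoking Corollary \ref{C25}. The paper's argument is exactly this componentwise reduction, so there is nothing to add.
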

\begin{proof}
Since the target manifold is a Euclidean space, the curvature is zero. If we write $\phi: (M^m, g)\longrightarrow \r^n$ as $\phi(p)=(\phi^1(p), \phi^2(p), \cdots, \phi^n(p))$, then we can easily check that 
\begin{eqnarray}
\tau(\phi)&=&(\Delta \phi^1, \; \Delta \phi^2, \;\cdots, \;\Delta \phi^n),\\\notag
\tau_2(\phi)&=&(\Delta^2 \phi^1,\; \Delta^2 \phi^2, \; \cdots, \;\Delta^2 \phi^n),{\rm and}\\\notag
\nabla^{\phi}_{{\rm grad} f}\tau(\phi)&=&(\nabla^{\phi}_{{\rm grad} f} \Delta \phi^1, \; \nabla^{\phi}_{{\rm grad} f} \Delta \phi^2, \;\cdots, \;\nabla^{\phi}_{{\rm grad} f} \Delta \phi^n).
\end{eqnarray}
It follows that the $f$-biharmonic map equation for $\phi$ becomes
\begin{eqnarray}
f\Delta^2 \phi^{\alpha}+(\Delta f) \Delta \phi^{\alpha}+2g({\rm grad} f, {\rm grad}\Delta \phi^{\alpha})=0,\;\; \alpha=1, 2, \cdots, n.
\end{eqnarray}
In other words, a map $\phi: (M^m, g)\longrightarrow \r^n$ from a manifold into a Euclidean space is and $f$-biharmonic map if and only if each of its component function is an $f$-biharmonic function. From this and Corollary \ref{C25} which states that any $f$-biharmonic function on a compact manifold is constant we obtain the theorem.
\end{proof}
\begin{proposition}\label{INVERSE}
The map $\phi: \r^m\setminus \{0\}\longrightarrow \r^m\setminus \{0\}$ with
$\phi(x)=\frac{x\;}{|x|^p}$ is an $f$-biharmonic map for $f(x)=|x|^k$ if and only if (i) $p=0$, or (ii) $p=m$, or (iii) $k=p+2$, or (iv) $k=p+2-m$.
In particular, for $m\ge 3$, the inversion $\phi: \r^m\setminus \{0\}\longrightarrow \r^m\setminus \{0\}$ with
$\phi(x)=\frac{x\;}{|x|^2}$ is a proper $f$-biharmonic map for $f(x)=|x|^4$. When $m\ne 4$, this inversion is also a proper $f$-biharmonic map for $f(x)=|X|^{4-m}$.
\end{proposition}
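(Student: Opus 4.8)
The plan is to reduce the $f$-biharmonicity of $\phi$ to an $f$-biharmonic function condition on each coordinate, exactly as in the proof of Theorem \ref{TE}. Writing $\phi=(\phi^1,\dots,\phi^m)$ with $\phi^\alpha(x)=x^\alpha|x|^{-p}$ and using that $\r^m$ is flat, the $f$-biharmonic map equation (\ref{FBH}) decouples into the $f$-biharmonic function equations for the components $\phi^\alpha$. By Proposition \ref{P3} (applicable since $f=|x|^k>0$ on $\r^m\setminus\{0\}$), this means $\phi$ is $f$-biharmonic if and only if $\Delta\!\left(f\,\Delta\phi^\alpha\right)=0$ for every $\alpha$. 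Thus the whole problem is converted into applying the Euclidean Laplacian twice to functions of the special form $x^\alpha|x|^{s}$.

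The computational engine is the single identity
\begin{equation}\notag
\Delta\!\left(x^\alpha |x|^{s}\right)=s(s+m)\,x^\alpha|x|^{s-2},
\end{equation}
which I would derive once by the Leibniz rule $\Delta(uv)=(\Delta u)v+2\langle\nabla u,\nabla v\rangle+u\Delta v$ with $u=x^\alpha$, $v=|x|^{s}$, together with the radial formula $\Delta|x|^{s}=s(s+m-2)|x|^{s-2}$. Applying it with $s=-p$ gives the tension field componentwise, $\Delta\phi^\alpha=p(p-m)\,x^\alpha|x|^{-p-2}$, so that $f\,\Delta\phi^\alpha=p(p-m)\,x^\alpha|x|^{\,k-p-2}$. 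Applying the identity a second time with $s=k-p-2$ yields
\begin{equation}\notag
\Delta\!\left(f\,\Delta\phi^\alpha\right)=p(p-m)(k-p-2)(k-p+m-2)\,x^\alpha|x|^{\,k-p-4}.
\end{equation}
Since the $x^\alpha$ cannot all vanish identically, the vanishing of this expression for all $\alpha$ is equivalent to $p(p-m)(k-p-2)(k-p+m-2)=0$, i.e.\ to $p=0$, $p=m$, $k=p+2$, or $k=p+2-m$. This establishes the four cases (i)--(iv).

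For the special maps the inversion has $p=2$, and I would simply substitute. The choice $f=|x|^4$ gives $k=4=p+2$, so case (iii) holds and $\phi$ is $f$-biharmonic; the choice $f=|x|^{4-m}$ gives $k=4-m=p+2-m$, so case (iv) holds. To promote these to \emph{proper} $f$-biharmonic maps I must rule out harmonicity and biharmonicity. The map is harmonic iff $\Delta\phi^\alpha\equiv0$, i.e.\ $p(p-m)=0$, which for $p=2$ fails precisely when $m\ne2$, so harmonicity is excluded for $m\ge3$. It is biharmonic iff $\Delta^2\phi^\alpha\equiv0$; a third use of the identity gives $\Delta^2\phi^\alpha=p(p-m)(-p-2)(m-p-2)\,x^\alpha|x|^{-p-4}$, which for $p=2$ vanishes exactly when $m=2$ or $m=4$. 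Hence for $m\ge3$ with $m\ne4$ the inversion is neither harmonic nor biharmonic, giving properness; this matches the restriction $m\ne4$ stated for the $f=|x|^{4-m}$ case. I expect the only genuine subtlety, beyond the bookkeeping, to be recording the exceptional dimension $m=4$, where $x/|x|^2$ is already biharmonic, so that for $f=|x|^4$ properness likewise requires $m\ne4$.
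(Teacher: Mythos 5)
Your proposal is correct and follows essentially the same route as the paper: reduce to componentwise $f$-biharmonic functions via Theorem \ref{TE} and Proposition \ref{P3}, apply the radial Laplacian identity twice to $x^\alpha|x|^{s}$, and read off the quartic condition $p(p-m)(k-p-2)(k-p+m-2)=0$. Where you go beyond the paper is in the properness check: the paper's proof stops at the four cases and never verifies that the special maps are neither harmonic nor biharmonic, whereas you compute $\Delta^2\phi^\alpha=p(p-m)(-p-2)(m-p-2)\,x^\alpha|x|^{-p-4}$ and correctly identify that for $p=2$ this vanishes when $m=4$. Your resulting caveat is a genuine correction to the statement as written: by the paper's own Remark (A), the inversion in $\r^4$ is a proper \emph{biharmonic} map ($p=m-2$), so for $m=4$ it cannot be a proper $f$-biharmonic map under the paper's definition (neither harmonic nor biharmonic), and the clause ``for $m\ge 3$ \dots{} is a proper $f$-biharmonic map for $f(x)=|x|^4$'' should also carry the restriction $m\ne 4$.
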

\begin{proof}
As we have seen in the proof of Theorem \ref{TE} that a map into a Euclidean space is an $f$-biharmonic map if and only if each of its component function is an $f$-biharmonic function. So, $\phi: \r^m\setminus \{0\}\longrightarrow \r^m\setminus \{0\}$ with
$\phi(x)=\frac{x\;}{|x|^p}$ is $f$-biharmonic if and only if the function
$u: \r^m\setminus \{0\}\longrightarrow \r$ with
$u(x)=x^i|x|^{-p}$ is an $f$-biharmonic function for any $i=1, 2, \cdots, m$. This, by Proposition \ref{P3}, is equivalent to the product $f\Delta u$ being a harmonic function. Using the formula $\triangle^{\mathbb{R}^{m}}(\left|x\right|^{\alpha})=\alpha(\alpha-2+m)
\left|x\right|^{\alpha-2}$ and a straightforward computation we have
\begin{eqnarray}
\triangle^{\mathbb{R}^{m}} u &=&\triangle^{\mathbb{R}^{m}} (x^i|x|^{-p}) = x^i\triangle^{\mathbb{R}^{m}} |x|^{-p}+2\langle {\rm grad}\, x^i, {\rm grad} |x|^{-p}\rangle
\\\notag
&=& p(p-m)x^i|x|^{-p-2}.
\end{eqnarray}
For $f(x)=|x|^k$, we have 
\begin{eqnarray}
\triangle^{\mathbb{R}^{m}} (f\triangle^{\mathbb{R}^{m}} u) &=& p(p-m)\triangle^{\mathbb{R}^{m}} (x^i|x|^{k-p-2}) \\\notag 
&=& p(p-m)[x^i\triangle^{\mathbb{R}^{m}} |x|^{k-p-2}+2\langle {\rm grad}\, x^i, {\rm grad} |x|^{k-p-2}\rangle]
\\\notag
&=&p (p-m)(k-p-2)(k-p+m-2)x^i|x|^{k-p-4}.
\end{eqnarray}
It follows that $u(x)=x^i|x|^{-p}$ is an $f$-biharmonic function with $f=|x|^k$ if and only if $p (p-m)(k-p-2)(k-p+m-2)=0$. Solving this equation we have (i) $p=0$, or (ii) $p=m$, or (iii) $k=p+2$, or (iv) $k=p+2-m$, from which the proposition follows.
\end{proof}
\begin{remark}
(A) One can check (see also \cite{BMO2}) that for the cases (i) $p=0$, and (ii) $p=m$, the maps $\phi=\frac{x}{|x|^p}$ are actually harmonic maps. We know that in these cases these maps are $f$-biharmonic for any $f$. For $k=0$ we have $f(x)=1$ and hence $f$-biharmonicity reduces to biharmonicity. In this case, (iii) and (iv) imply that $\phi=\frac{x}{|x|^p}$ is a proper biharmonic map if and only if $p=-2$, or $p=m-2$. Note that the case $p=-2$ was missed in the list of Remark 5.8 in \cite{BMO2}.\\
(B) For $ p\ne 0, m, {\rm and }\; k\ne 0$, the maps in cases (iii) and (iv) provide infinitely many example of proper $f$-biharmonic maps (i.e., which are neither harmonic nor biharmonic maps).\\
(C) It is well known that the inversion $\phi: \r^m\setminus \{0\}\longrightarrow \r^m\setminus \{0\}$, 
$\phi(x)=\frac{x\;}{|x|^2}$ is a conformal map between the same dimensional Euclidean spaces. Note that the $f$-biharmonic map equation for conformal maps between the same dimensional spaces was derived in \cite{Lu}, however, not a single example of such maps was found. Our Proposition \ref{INVERSE} shows that there are infinitely many proper $f$-biharmonic conformal diffeomorphisms and all but one of which are proper $f$-biharmonic for at least two different choices of $f$ functions. For some study on biharmonic diffeomorphisms see \cite{BFO1}.
\end{remark}
\section{$f$-Biharmonic submanifolds}
\begin{definition}
A submanifold in a Riemannian manifold is called an {\bf $f$-biharmonic submanifold} if the isometric immersion defining the submanifold is an $f$-biharmonic map.
\end{definition}
From the definition and the relationships among harmonic, biharmonic and $f$-biharmonic maps we have the following inclusion relationships
$$\{{\rm \bf Minimal\; submanifolds}\}\subset \{{\rm \bf Biharmonic \;submanifolds} \}\subset$$$$\subset \{{\rm \bf f-Biharmonic \;submanifolds}\}.$$
From now on we will call an $f$-biharmonic submanifold {\em a proper $f$-biharmonic submanifold} if it is neither a minimal nor a biharmonic submanifold.\\
\begin{theorem}\label{HSF}
Let $\phi:M^{m}\longrightarrow N^{m+1}$ be an isometric immersion
of codimension-one with mean curvature vector $\eta=H\xi$. Then
$\varphi$ is an $f$-biharmonic if and only if:
\begin{equation}\label{BHEq}
\begin{cases}
\Delta H-H |A|^{2}+H{\rm
Ric}^N(\xi,\xi)+H(\Delta f)/f+2({\rm grad} \ln f) H=0,\\
2A\,({\rm grad}\,H) +\frac{m}{2} {\rm grad}\, H^2
-2\, H \,({\rm Ric}^N\,(\xi))^{\top}+2H A\,({\rm grad}\,\ln f)=0,
\end{cases}
\end{equation}
where ${\rm Ric}^N : T_qN\longrightarrow T_qN$ denotes the Ricci
operator of the ambient space defined by $\langle {\rm Ric}^N\, (Z),
W\rangle={\rm Ric}^N (Z, W)$, $A$ is the shape operator of the
hypersurface with respect to the unit normal vector $\xi$, and $\Delta, \rm grad$ are the Laplace and the gradient operator of the hypersurface respectively.
\end{theorem}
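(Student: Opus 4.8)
The plan is to specialize the general $f$-biharmonic map equation (\ref{FBH}) to an isometric immersion of codimension one and then split the resulting vector equation into its components normal and tangent to the hypersurface. First I would record that for a codimension-one isometric immersion the tension field is the purely normal section $\tau(\phi)=m\eta=mH\xi$. This reduces the whole problem to decomposing the three terms of $\tau_{2,f}(\phi)=f\tau_2(\phi)+(\Delta f)\tau(\phi)+2\nabla^{\phi}_{\mathrm{grad}\,f}\tau(\phi)$ into their $\xi$-direction and their tangential parts, and then reading off one scalar equation from the normal components and one vector equation from the tangential ones.

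Next I would invoke the known decomposition of the bitension field of a hypersurface (see, e.g., \cite{Ou3}), namely $\tau_2(\phi)=m\{(\Delta H - H|A|^2 + H\,\mathrm{Ric}^N(\xi,\xi))\xi + (2A(\mathrm{grad}\,H)+\tfrac{m}{2}\mathrm{grad}\,H^2 - 2H(\mathrm{Ric}^N(\xi))^{\top})\}$, whose normal and tangential parts are exactly the two expressions appearing in the classical biharmonic hypersurface system. Multiplying by $f$, the term $f\tau_2(\phi)$ then contributes precisely the $f$-free parts of both equations of (\ref{BHEq}), scaled by $fm$.

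The remaining work is to handle the two genuinely new terms. The term $(\Delta f)\tau(\phi)=m(\Delta f)H\xi$ is purely normal and, after dividing by $fm$, supplies the $H(\Delta f)/f$ term. The term $2\nabla^{\phi}_{\mathrm{grad}\,f}\tau(\phi)$ is where the only real computation lies: since $\mathrm{grad}\,f$ is tangent to $M$, the product rule gives $2\nabla^{\phi}_{\mathrm{grad}\,f}(mH\xi)=2m\langle\mathrm{grad}\,f,\mathrm{grad}\,H\rangle\xi+2mH\nabla^{\phi}_{\mathrm{grad}\,f}\xi$, and the Weingarten formula $\nabla^{\phi}_X\xi=-A(X)$ (the normal bundle being one-dimensional annihilates the $\nabla^{\perp}$ part) then produces a normal piece $2m\langle\mathrm{grad}\,f,\mathrm{grad}\,H\rangle\xi$ and a tangential piece $\mp 2mH A(\mathrm{grad}\,f)$. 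Collecting the normal components and dividing by $fm$ yields the first equation of (\ref{BHEq}) upon rewriting $\tfrac{2}{f}\langle\mathrm{grad}\,f,\mathrm{grad}\,H\rangle=2\langle\mathrm{grad}\ln f,\mathrm{grad}\,H\rangle$; collecting the tangential components and dividing by $fm$ yields the second, using $\tfrac{1}{f}A(\mathrm{grad}\,f)=A(\mathrm{grad}\ln f)$.

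I expect the main obstacle to be bookkeeping rather than anything conceptual: one must carefully distinguish the pullback connection $\nabla^{\phi}$ from the Levi-Civita connections of $M$ and $N$, and fix a single sign convention for the shape operator $A$ and the second fundamental form, since the signs of several terms in (\ref{BHEq})—in particular the sign in front of $A(\mathrm{grad}\ln f)$—are convention-dependent. With the conventions under which the classical biharmonic hypersurface system is recovered when $f$ is constant, the stated system (\ref{BHEq}) follows.
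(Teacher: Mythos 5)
Your strategy is exactly the paper's: write $\tau(\phi)=mH\xi$, quote the bitension-field decomposition of a hypersurface from \cite{Ou3}, compute $2\nabla^{\phi}_{{\rm grad}\,f}\tau(\phi)$ by the product rule plus the Weingarten formula (which you state more carefully than the paper does --- its intermediate formula (\ref{GI}) actually drops a factor of $H$), and split the $f$-biharmonic map equation (\ref{FBH}) into normal and tangential components. So the route is the same; but there is one genuine flaw in the execution, and it sits precisely at the only place where this theorem says something beyond the classical biharmonic system: the sign of the term $2HA({\rm grad}\,\ln f)$.

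Under the convention you yourself adopt for Weingarten, $\nabla^{\phi}_X\xi=-A(X)$, the decomposition proved in \cite{Ou3} (and quoted as (\ref{BTI}) in the paper) is
\begin{equation}\notag
\tau_{2}(\phi)=m\left(\Delta H-H|A|^{2}+H\,{\rm Ric}^N(\xi,\xi)\right)\xi
- m\left(2A({\rm grad}\,H)+\tfrac{m}{2}\,{\rm grad}\,H^2-2H({\rm Ric}^N(\xi))^{\top}\right),
\end{equation}
with a \emph{minus} sign in front of the tangential bracket, whereas you wrote a plus. That relative sign is invisible in the purely biharmonic case, since there each bracket is separately set to zero; here it is essential, because the tangential part of $2\nabla^{\phi}_{{\rm grad}\,f}\tau(\phi)$ is unambiguously $-2mHA({\rm grad}\,f)$ (no $\mp$: your own Weingarten formula forces the minus), and it must be combined with the tangential part of $f\tau_2(\phi)$ before any overall sign is cleared. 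With the correct minus sign one divides the tangential equation by $-fm$ and obtains $+2HA({\rm grad}\,\ln f)$, as in (\ref{BHEq}); with your plus sign one obtains $-2HA({\rm grad}\,\ln f)$, i.e., a different theorem. Moreover, your proposed escape hatch --- ``choose the conventions under which the classical biharmonic hypersurface system is recovered when $f$ is constant'' --- cannot repair this, because when $f$ is constant the disputed term vanishes identically: \emph{both} sign choices recover the classical system, so that limit carries no information whatsoever about the sign in question. The fix is simply to quote (\ref{BTI}) with its minus sign (or rederive it via the Codazzi equation under your stated convention) and then track the single overall sign $-fm$ through the division; the normal component of your argument is unaffected and correct as written.
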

\begin{proof}
It is well known that the tension field of the hypersurface is given by
\begin{equation}\label{TI}
\tau(\phi)=m H\xi.
\end{equation}
From Theorem 2.1 in \cite{Ou3} we have the bitension field of the hypersurface given by
\begin{eqnarray}\label{BTI}
\tau_{2}(\phi) &=&m\left(\Delta H-H|A|^{2}+H {\rm Ric}^N(\xi,\xi)\right)\xi\\\notag
&&-m\left( 2A\,({\rm grad}\,H) +\frac{m}{2} ({\rm
grad}\, H^2) -2\, H\,({\rm Ric}(\xi))^{\top}\right),
\end{eqnarray}
To compute the term $\nabla^{\phi}_{{\rm grad} f}\tau(\phi)$, we
choose a local orthonormal frame $\{{e_{i}}\}_{i=1,\ldots,m}$ on $M$
so that $\{{\rm d}\phi(e_{1}), \ldots, {\rm d}\phi(e_{m}),
\xi\}$ forms an adapted orthonormal frame of the ambient space defined
on the hypersurface. Identifying ${\rm
d}\phi(X)=X,\;\;\nabla^{\phi}_{X} W =\nabla^{N}_{X}W$ we have
\begin{eqnarray}\label{GI}
\nabla^{\phi}_{{\rm grad} f}\tau(\phi)=m \nabla^{N}_{{\rm grad} f} H\xi=m\{[({\rm grad} f)H]\xi-A({\rm grad} f )\}
\end{eqnarray}
Substituting (\ref{TI}), (\ref{BTI}) and (\ref{GI}) into the $f$-biharmonic map equation (\ref{FBH}) and simplifying the result we obtain the theorem.
\end{proof}
\begin{corollary}\label{CC}
A hypersurface $\phi: M^{m}\longrightarrow N^{m+1}(C)$ in a space form of constant sectional curvature $C$ is $f$-biharmonic if and only if its mean curvature function $H$ satisfies the following equation
\begin{equation}\label{Sm}
\begin{cases}
\Delta H-H |A|^{2}+mCH+H(\Delta f)/f+2({\rm grad} \ln f) H=0,\\
2A\,({\rm grad}\,H) +\frac{m}{2} {\rm grad}\, H^2
+2H A\,({\rm grad}\,\ln f)=0.
\end{cases}
\end{equation}
\end{corollary}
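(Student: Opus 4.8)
The plan is to derive this corollary directly from Theorem \ref{HSF} by specializing the ambient space to the space form $N^{m+1}(C)$ and evaluating the two Ricci terms that appear in system (\ref{BHEq}). The only input needed is the curvature of a space form: since $N^{m+1}(C)$ has constant sectional curvature $C$, its curvature tensor is $R^N(X,Y)Z = C(\langle Y,Z\rangle X - \langle X,Z\rangle Y)$, and contracting over an orthonormal frame of the $(m+1)$-dimensional ambient space gives the Ricci tensor ${\rm Ric}^N(Y,Z) = mC\langle Y,Z\rangle$.

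First I would substitute ${\rm Ric}^N(\xi,\xi)$ into the first equation of (\ref{BHEq}). Because $\xi$ is a unit normal, ${\rm Ric}^N(\xi,\xi) = mC\langle\xi,\xi\rangle = mC$, so the term $H\,{\rm Ric}^N(\xi,\xi)$ becomes $mCH$ and the first equation of (\ref{Sm}) follows at once. Next I would handle the tangential Ricci term in the second equation. From $\langle {\rm Ric}^N(Z),W\rangle = mC\langle Z,W\rangle$ one reads off the Ricci operator ${\rm Ric}^N(Z) = mCZ$, whence ${\rm Ric}^N(\xi) = mC\xi$ is purely normal to the hypersurface and its tangential projection $({\rm Ric}^N(\xi))^{\top}$ vanishes. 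This kills the term $-2H({\rm Ric}^N(\xi))^{\top}$ in the second equation of (\ref{BHEq}), leaving precisely the second equation of (\ref{Sm}).

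There is no genuine obstacle here, as the corollary is an immediate specialization of Theorem \ref{HSF}; the only point requiring care is getting the Ricci coefficient right (the factor is $mC$, coming from the ambient dimension $m+1$ rather than the hypersurface dimension $m$) and observing that the \emph{tangential} part of the Ricci operator, not the operator itself, is what enters the second equation, so that it vanishes exactly because $\xi$ is normal.
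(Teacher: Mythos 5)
Your proof is correct and is exactly the specialization the paper intends: the paper states Corollary \ref{CC} without proof as an immediate consequence of Theorem \ref{HSF}, relying on precisely your two observations that ${\rm Ric}^N = mC\,\langle\cdot,\cdot\rangle$ for an $(m+1)$-dimensional space form (so $H\,{\rm Ric}^N(\xi,\xi)=mCH$) and that $({\rm Ric}^N(\xi))^{\top}= (mC\xi)^{\top}=0$. Your care with the factor $mC$ (ambient dimension $m+1$, hence $n-1=m$) and with the tangential projection is exactly right.
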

Similarly, we have
\begin{corollary}\label{CSMFLD}
A submanifold $\phi: M^{m}\longrightarrow N^{n}(C)$ in a space form of constant sectional curvature $C$ is $f$-biharmonic if and only if its mean curvature vector $H$ satisfies the following equation
\begin{equation}\label{CONT}
\begin{cases}
\Delta^{\bot}H-\frac{\Delta f}{f}H-2\nabla^{\perp}_{\rm grad \ln f}H+{\rm trace}\, B(-,A_{H}-)+Cm H=0\\
2 \,{\rm trace}\, A_{{\nabla_{(-)}^{\bot}}H}(-)+\frac{m}{2}\,{\rm
grad}\, (|H|^2)+2A_H(\rm grad\, ln f)=0,
\end{cases}
\end{equation}
\end{corollary}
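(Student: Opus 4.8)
The plan is to follow the proof of Theorem \ref{HSF} line for line, now allowing arbitrary codimension, and to read off the normal and tangential components of the $f$-biharmonic map equation \eqref{FBH}. First I would record the tension field of the isometric immersion, $\tau(\phi)=mH$, where $H$ is now the (vector-valued) mean curvature field, a section of the normal bundle. Substituting this into \eqref{FBH} requires three ingredients: the bitension field $\tau_2(\phi)$, the correction term $2\nabla^{\phi}_{{\rm grad}\,f}\tau(\phi)$, and the zeroth-order term $(\Delta f)\tau(\phi)=(\Delta f)mH$, the last of which is purely normal.

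For a submanifold of a space form $N^n(C)$ the bitension field is the classical biharmonic-submanifold expression, obtained by feeding $\tau(\phi)=mH$ and $R^N(X,Y)Z=C(\langle Y,Z\rangle X-\langle X,Z\rangle Y)$ into \eqref{BTF} and decomposing by the Gauss and Weingarten formulas. Its normal part is a multiple of $\Delta^{\bot}H+{\rm trace}\,B(-,A_H-)$ together with the curvature contribution of ${\rm trace}_g R^N({\rm d}\phi,\tau(\phi)){\rm d}\phi$, and its tangential part is the multiple of $2\,{\rm trace}\,A_{\nabla^{\bot}_{(-)}H}(-)+\frac{m}{2}{\rm grad}\,|H|^2$ familiar from Chen--Oniciuc theory; specializing to $n=m+1$ must reproduce \eqref{BTI}. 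The only genuinely new term, absent in the biharmonic case, is $2\nabla^{\phi}_{{\rm grad}\,f}\tau(\phi)$. Since $\tau(\phi)=mH$ with $H$ normal, the Weingarten formula splits it cleanly,
\begin{equation}\notag
\nabla^{\phi}_{{\rm grad}\,f}\tau(\phi)=m\,\nabla^{N}_{{\rm grad}\,f}H=m\left(\nabla^{\bot}_{{\rm grad}\,f}H-A_H({\rm grad}\,f)\right),
\end{equation}
with normal piece $m\nabla^{\bot}_{{\rm grad}\,f}H$ and tangential piece $-mA_H({\rm grad}\,f)$; this is the higher-codimension analogue of \eqref{GI}.

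I would then substitute all three ingredients into \eqref{FBH}, separate the resulting identity into its normal and tangential components, divide through by $-mf$ (legitimate since $f>0$), and rewrite ${\rm grad}\,f/f={\rm grad}\ln f$. The normal component yields the first line of \eqref{CONT}, the curvature term supplying the $CmH$ summand, while the tangential component, where $2A_H({\rm grad}\,f)/f=2A_H({\rm grad}\ln f)$, yields the second. The $f$-terms $-\frac{\Delta f}{f}H$ and $-2\nabla^{\bot}_{{\rm grad}\ln f}H$ appear with the opposite sign to their counterparts in Corollary \ref{CC}; this is not an inconsistency but merely reflects that the normal Laplacian $\Delta^{\bot}$ here is the rough (Bochner) Laplacian, whereas the scalar $\Delta$ in \eqref{Sm} is the geometer's Laplacian $=-\Delta^{\bot}$ in codimension one, so the two equations are divided by $-mf$ and $+mf$ respectively.

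The computation is entirely algebraic once the bitension formula is granted, so I anticipate no analytic difficulty; the genuine obstacle is sign and convention bookkeeping. One must fix a single convention for $\Delta^{\bot}$, the shape operator $A$, the second fundamental form $B$ and the sign of $C$, and then track each term faithfully through the normal/tangential split. The single most error-prone point is the curvature term: the contraction ${\rm trace}_g R^N({\rm d}\phi,\tau(\phi)){\rm d}\phi$ contributes a multiple of $H$, and whether the resulting $CmH$ summand enters \eqref{CONT} with a plus or a minus sign depends delicately on the Laplacian convention adopted for $\Delta^{\bot}$. I would pin this down unambiguously by demanding that the codimension-one specialization of \eqref{CONT} agree term by term with the already-established hypersurface equation \eqref{Sm} of Corollary \ref{CC}, which fixes every sign at once.
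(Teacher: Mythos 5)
Your proposal follows exactly the route the paper intends: the paper offers no separate proof of Corollary \ref{CSMFLD} (it is introduced only by ``Similarly, we have''), the understood argument being precisely the one you describe --- substitute $\tau(\phi)=mH$, the standard space-form bitension decomposition, and the Weingarten splitting of $2\nabla^{\phi}_{{\rm grad}\,f}\tau(\phi)$ into \eqref{FBH}, then separate normal and tangential components. Your computation of the genuinely new term, $\nabla^{\phi}_{{\rm grad}\,f}\tau(\phi)=m\bigl(\nabla^{\bot}_{{\rm grad}\,f}H-A_H({\rm grad}\,f)\bigr)$, and your explanation of why the $f$-terms in \eqref{CONT} carry the opposite sign to their counterparts in \eqref{Sm} (the two equations being the normal component divided by $-mf$ and $+mf$ respectively, with $\Delta^{\bot}=-{\rm Trace}(\nabla^{\bot})^2$) are both correct; the tangential equation comes out exactly as printed.

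One caveat, and it concerns precisely the term you flagged as most error-prone: your proposed consistency check cannot be passed by the first equation as printed. The curvature contribution is $-{\rm Trace}_g R^N(d\phi,\tau(\phi))d\phi=+Cm^2H$, so the normal component of $\tau_{2,f}(\phi)/(mf)$ is $-\Delta^{\bot}H-{\rm trace}\,B(-,A_H-)+CmH+\frac{\Delta f}{f}H+2\nabla^{\bot}_{{\rm grad}\ln f}H$; multiplying by $-1$ to match the sign pattern of the other terms in \eqref{CONT} gives $\Delta^{\bot}H+{\rm trace}\,B(-,A_H-)-CmH-\frac{\Delta f}{f}H-2\nabla^{\bot}_{{\rm grad}\ln f}H=0$. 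The curvature term therefore enters with a minus sign, whereas \eqref{CONT} prints $+CmH$; equivalently, the codimension-one specialization of \eqref{CONT} as printed disagrees with \eqref{Sm} exactly in the sign of $CmH$, since flipping the overall sign flips the $|A|^2$ and $f$-terms as well. This is a sign typo in the corollary's statement rather than a gap in your method: carrying out your plan faithfully (and enforcing agreement with Corollary \ref{CC}, as you propose) produces the corrected equation, not the printed one.
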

where $\Delta^{\bot} H=-\rm Trace (\nabla^{\bot})^2 H$.
\begin{corollary}
A compact nonzero constant mean curvature $f$-biharmonic hypersurface $\phi: M^{m}\longrightarrow S^{m+1}$ in a sphere with $|A|^2=\rm constant$ is biharmonic.
\end{corollary}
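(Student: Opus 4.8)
The plan is to specialize the hypersurface $f$-biharmonicity equations of Corollary \ref{CC} to the unit sphere and then exploit the two constancy hypotheses. Taking $C=1$ in \eqref{Sm} and using that $H$ is a nonzero constant, I first note that $\operatorname{grad} H=0$ and $\Delta H=0$, so every term in \eqref{Sm} carrying a derivative of $H$ drops out. The tangential equation then collapses to $2H\,A(\operatorname{grad}\ln f)=0$, hence $A(\operatorname{grad}\ln f)=0$, while the normal equation reduces to $-H|A|^2+mH+H(\Delta f)/f=0$. Dividing by the nonzero constant $H$ gives $\Delta f/f=|A|^2-m$.

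Because $|A|^2$ is assumed to be constant, the right-hand side is a constant $\lambda:=|A|^2-m$, so $f$ satisfies the eigenvalue-type equation $\Delta f=\lambda f$ on $M$. The next step is to bring in compactness: integrating this identity over the closed manifold $(M,g)$ and applying the divergence theorem, for which $\int_M\Delta f\,v_g=0$, yields $\lambda\int_M f\,v_g=0$. Since $f>0$ forces $\int_M f\,v_g>0$, we conclude $\lambda=0$, and therefore $\Delta f=0$. Thus $f$ is harmonic on a compact manifold, hence a positive constant.

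Finally, once $f$ is a positive constant we have $\Delta f=0$ and $\operatorname{grad} f=0$, so the $f$-biharmonic map equation \eqref{FBH} collapses to $f\,\tau_2(\phi)=0$; as $f\neq 0$ this is precisely $\tau_2(\phi)=0$, i.e. $\phi$ is biharmonic, which is the assertion.

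The computation is short, and the only real content is the observation that the two constancy assumptions together pin $\Delta f/f$ down as a constant; after that, compactness together with $f>0$ does all the work. The point most worth checking carefully is the reduction of the normal equation: one must confirm that the term $2(\operatorname{grad}\ln f)H$, read as the directional derivative $\langle\operatorname{grad}\ln f,\operatorname{grad} H\rangle$, indeed vanishes under the constant-mean-curvature hypothesis, and that the sign convention for $\Delta$ is the one for which $\int_M\Delta f\,v_g=0$ holds on a closed manifold, consistent with the subharmonicity argument used in the proof of Theorem \ref{NPC}.
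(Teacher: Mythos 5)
Your proposal is correct and takes essentially the same route as the paper: both substitute the nonzero constancy of $H$ into the equations of Corollary \ref{CC} with $C=1$ to get $\Delta f=(|A|^{2}-m)f$ and $A({\rm grad}\,\ln f)=0$, then use compactness and $f>0$ to force $f$ to be constant, whence $\tau_2(\phi)=0$. The only (minor) difference is the closing step: you integrate $\Delta f=\lambda f$ over the closed manifold to force $\lambda=0$ and hence $f$ harmonic, while the paper observes that $f$ is harmonic, subharmonic, or superharmonic according to the sign of $|A|^{2}-m$ and invokes the constancy of such functions on compact manifolds; both are standard and valid.
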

\begin{proof}
Substituting $H=\rm constant\ne 0$ into the $f$-biharmonic hypersurface equation (\ref{Sm}) we have
\begin{equation}\label{Sm}
\begin{cases}
\Delta f=(|A|^2-m)f,\\
A\,({\rm grad}\,\ln f)=0.
\end{cases}
\end{equation}
If $|A|^2$ is constant, then we have either $|A|^2-m=0$ and in this case $f$ is a harmonic function, or $|A|^2-m\ne 0$. In the latter case we conclude that $f$ is either subharmonic or superharmonic since $f>0$. Since $M$ is compact, the well-known fact that any harmonic (subharmonic or superharmonic) function on a compact manifold is constant implies that $f$ is a constant function. Thus, the $f$-biharmonic hypersurface is actually biharmonic. 
\end{proof}

For classification of biharmonic submanifolds with parallel mean curvature vector and $|A|^2=\rm constant$ in sphere see \cite{BMO3}.\\

It is well known (see \cite{CI} and also \cite{Ji2}) that any biharmonic surface in Euclidean space $\r^3$ is minimal. In other words, there exists no proper biharmonic surfaces in $\r^3$. Our first question to ask is: does there exist a proper $f$-biharmonic surface in $\r^3$ ? We will show that there are infinitely many such surfaces in $\r^3$. We achieve this by using the interesting link between $f$-biharmonic surfaces and the biharmonic conformal immersions of surfaces in a $3$-manifold. For the study of biharmonic conformal immersions of surfaces in 3-manifolds we refer the readers to \cite{Ou2} and \cite{Ou5}. In particular, we recall that a surface (i.e., an isometric immersion) $\phi: M^2 \longrightarrow (N^3, h)$ is said to admit a biharmonic conformal immersion into $3$-manifold $(N^3, h)$ if there exists a function $\lambda: M^2\longrightarrow (0, \infty)$ such that the conformal immersion $\phi: (M^2, \lambda^{-2}\phi^*h) \longrightarrow (N^3, h)$ is biharmonic map. In this case, we also say that the surface $\phi: M^2 \longrightarrow (N^3, h)$ can be biharmonically conformally immersed into the $3$-manifold $(N^3, h)$ with conformal factor $\lambda$.
\begin{corollary}\label{CMC}
(1) A surface $\phi: M^2 \longrightarrow (N^3, h)$ in a $3$-manifold is an $f$-biharmonic surface if and only if the conformal immersion $\phi: (M^2, f^{-1}\phi^*h) \longrightarrow (N^3, h)$ is a biharmonic map, i.e., the surface can be biharmonically conformally immersed into $(N^3, h)$ with conformal factor $\lambda=f^{1/2}$. \\
(2) The circular cylinder $\phi:D=\{ (\theta,z)\in (0,2\pi)\times \r \}\longrightarrow (\r^3, \delta_0)$ with $\phi(\theta,z)=(R\cos\,\theta, R\sin\,\theta, z)$ is an $f$-biharmonic surface for any function $f$ from the family $f=\big(C_2e^{\pm z/R}-C_1C_2^{-1}R^2e^{\mp z/R}\big)/2$, where $C_1, C_2$ are constants.
\end{corollary}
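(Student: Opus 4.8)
The plan is to handle the two parts separately, each reducing to a result already proved above. Part (1) is essentially a repackaging of Theorem \ref{CONF}. Since $\phi$ is an isometric immersion, the relevant domain metric is the induced metric $g=\phi^*h$, so that ``$\phi$ is an $f$-biharmonic surface'' means exactly that $\phi\colon(M^2,\phi^*h)\longrightarrow(N^3,h)$ is an $f$-biharmonic map. Applying Theorem \ref{CONF} with $g=\phi^*h$ then converts this into the statement that $\phi\colon(M^2,f^{-1}\phi^*h)\longrightarrow(N^3,h)$ is a biharmonic map. Comparing the conformal factor $f^{-1}$ with the normalization $\lambda^{-2}$ in the definition of a biharmonic conformal immersion recalled just above, one reads off $\lambda^{-2}=f^{-1}$, i.e.\ $\lambda=f^{1/2}$. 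The only point requiring care here is bookkeeping of conventions: matching the two conformal factors and confirming that the induced-metric reading of ``isometric immersion'' is the intended one.

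For part (2) the cleanest route is to feed the cylinder directly into the hypersurface system of Corollary \ref{CC} with $m=2$ and $C=0$ (one could alternatively deduce it from part (1) together with the study of biharmonic conformal immersions, but the direct computation is more self-contained). First I would record the standard geometric data of $\phi(\theta,z)=(R\cos\theta,R\sin\theta,z)$: the induced metric is $g=R^2\,d\theta^2+dz^2$, the unit normal is $\xi=(\cos\theta,\sin\theta,0)$, and the shape operator is diagonal with eigenvalues $-1/R$ (circular direction) and $0$ (axial direction). Hence $H$ is a nonzero constant and $|A|^2=1/R^2$. Because $H$ is constant, the terms $A(\mathrm{grad}\,H)$ and $\mathrm{grad}\,H^2$ vanish, so the tangential equation of Corollary \ref{CC} collapses to $A(\mathrm{grad}\,\ln f)=0$; as $0$ is the only vanishing eigenvalue of $A$, this forces $\mathrm{grad}\,f$ to lie in the axial direction, i.e.\ $f=f(z)$.

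With $f=f(z)$ the normal equation of Corollary \ref{CC} reduces, after cancelling the constant $H\neq0$, to $\Delta f=|A|^2 f=f/R^2$. The remaining computation is the surface Laplacian: on the flat cylinder $\sqrt{\det g}=R$ is constant and $g^{zz}=1$, so a function of $z$ alone satisfies $\Delta f=f''(z)$ with the sign convention $\Delta=\mathrm{div}\,\mathrm{grad}$ fixed earlier by $\Delta|x|^2=6$. Thus $f$-biharmonicity becomes the linear ODE $f''-f/R^2=0$, whose general solution is $f=Ae^{z/R}+Be^{-z/R}$. Setting $A=C_2/2$ and $B=-C_1C_2^{-1}R^2/2$ (and symmetrically for the interchanged exponentials) reproduces exactly the displayed family $f=\big(C_2e^{\pm z/R}-C_1C_2^{-1}R^2e^{\mp z/R}\big)/2$, so every such $f$ makes the cylinder $f$-biharmonic; one restricts the constants to the range where $f>0$ in accordance with the standing positivity assumption on $f$.

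I do not expect a genuine obstacle, since both steps are routine once the earlier results are in hand. The one place to stay vigilant is sign and normalization conventions: the sign of $\Delta=\mathrm{div}\,\mathrm{grad}$, the sign of $H$ and of the principal curvature (immaterial here, as only $H\neq0$ and $|A|^2$ enter), the conformal-factor matching in part (1), and the recognition that the oddly-parametrized family in the statement is merely the two-parameter solution space of $f''=f/R^2$.
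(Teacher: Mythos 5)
Your proof is correct, but for part (2) it takes a genuinely different route from the paper. For part (1) you do exactly what the paper does: invoke Theorem \ref{CONF} with $g=\phi^*h$ and match the conformal factors $\lambda^{-2}=f^{-1}$. For part (2), however, the paper does not compute anything: it applies part (1) to translate the question into the existence of a biharmonic conformal immersion of the cylinder, and then cites Proposition 2 of \cite{Ou2}, which had already determined the admissible conformal factors $\lambda$ for the cylinder --- this is also why the family of $f$'s in the statement carries its odd-looking parametrization, inherited verbatim from that external result. You instead verify the claim directly from the $f$-biharmonic hypersurface system of Corollary \ref{CC} with $m=2$, $C=0$: constancy of $H\neq 0$ collapses the tangential equation to $A(\mathrm{grad}\,\ln f)=0$, forcing $f=f(z)$, and the normal equation becomes $f''=f/R^2$, whose solution space $\{Ae^{z/R}+Be^{-z/R}\}$ contains the stated family. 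Your computations (shape operator eigenvalues $-1/R$ and $0$, $|A|^2=1/R^2$, $\Delta f=f''$ for $f=f(z)$ on the flat cylinder, and the sign convention $\Delta=\di\mathrm{grad}$) all check out. What your route buys is self-containedness --- no reliance on \cite{Ou2} --- plus a slightly stronger conclusion, namely a complete characterization of all $f$ making the cylinder $f$-biharmonic rather than mere membership of the displayed family; what the paper's route buys is brevity and a conceptual illustration of how part (1) converts known results on biharmonic conformal immersions into $f$-biharmonic surfaces.
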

\begin{proof}
Statement (1) follows from the definition of an $f$-biharmonic surface and Theorem \ref{CONF} whilst Statement (2) is obtained by using Statement (1) and Proposition 2 in \cite{Ou2}.
\end{proof}
\section{$f$-Biharmonic curves}
Another special case of $f$-biharmonic maps is an $f$-biharmonic curve.
\begin{lemma}
An arclength parametrized curve $\gamma: (a, b)\longrightarrow (N^m, g)$ is an $f$-biharmonic curve with a function $f: (a, b)\longrightarrow (0,\infty) $ if and only if
\begin{equation}\label{CURVE}
f(\nabla^N_{\gamma'}\nabla^N_{\gamma'}\nabla^N_{\gamma'}\gamma'-R^N(\gamma', \nabla^N_{\gamma'}\gamma')\gamma')+2f' \nabla^N_{\gamma'}\nabla^N_{\gamma'}\gamma'+f''\nabla^N_{\gamma'}\gamma'=0
\end{equation}
\end{lemma}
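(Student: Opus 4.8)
The plan is to derive (\ref{CURVE}) by specializing the general $f$-biharmonic map equation (\ref{FBH}) to the case in which the domain is the one-dimensional interval $(a,b)$ carrying the standard metric $ds^2$. For an arclength parametrization the coordinate field $\partial_s$ is a unit parallel field, so that $\nabla^M_{\partial_s}\partial_s=0$; throughout I use the standard identifications $d\gamma(\partial_s)=\gamma'$ and $\nabla^\gamma_{\partial_s}W=\nabla^N_{\gamma'}W$ for a section $W$ along $\gamma$. Because the equivalence (\ref{FBH}) characterizes $f$-biharmonicity in both directions, carrying it through the specialization will yield the ``if and only if'' assertion at once.

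First I would compute the tension field. Since the domain is one-dimensional, the trace in $\tau(\gamma)={\rm Tr}_g\nabla d\gamma$ contains a single term, and $\nabla^M_{\partial_s}\partial_s=0$ reduces it to $\tau(\gamma)=\nabla^\gamma_{\partial_s}d\gamma(\partial_s)=\nabla^N_{\gamma'}\gamma'$. Next I would specialize the bitension field (\ref{BTF}): once more the one-dimensional trace collapses to a single term and the vanishing of $\nabla^M_{\partial_s}\partial_s$ removes the $\nabla^\phi_{\nabla^M}$ contribution, leaving
\[
\tau_2(\gamma)=\nabla^N_{\gamma'}\nabla^N_{\gamma'}\nabla^N_{\gamma'}\gamma'-R^N(\gamma',\nabla^N_{\gamma'}\gamma')\gamma'.
\]

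It then remains to handle the terms carrying $f$. On $((a,b),ds^2)$ one has ${\rm grad}\,f=f'\partial_s$ and, with the same Laplacian convention used for functions in Proposition \ref{P3}, $\Delta f=f''$; consequently $\nabla^\phi_{{\rm grad}\,f}\tau(\gamma)=f'\nabla^N_{\gamma'}\nabla^N_{\gamma'}\gamma'$. Substituting these three expressions into (\ref{FBH}) and collecting terms produces exactly (\ref{CURVE}).

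The calculation is entirely routine, so the only thing to watch is the consistency of conventions: using $\Delta f=f''$ in agreement with the rest of the paper, and carrying the sign of the curvature term in (\ref{BTF}) through unchanged. With these fixed by the ambient equation (\ref{FBH}), there is no genuine obstacle; the content of the lemma is simply the reduction of the general $f$-biharmonic map equation to a one-dimensional domain.
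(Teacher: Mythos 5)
Your proposal is correct and follows essentially the same route as the paper: compute $\tau(\gamma)=\nabla^N_{\gamma'}\gamma'$, note $\Delta f=f''$ and $\nabla^{\gamma}_{{\rm grad}f}\tau(\gamma)=f'\nabla^N_{\gamma'}\nabla^N_{\gamma'}\gamma'$ on $((a,b),ds^2)$, and substitute into the $f$-biharmonic map equation (\ref{FBH}). The only difference is cosmetic: you spell out the one-dimensional reduction of the bitension field explicitly, whereas the paper leaves that step implicit in the final substitution.
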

\begin{proof}
Let $\gamma=\gamma(s)$ be parametrized by arclength. Then, $e_1=\frac{\partial }{\partial s}$ is an orthonormal frame on $((a, b), ds^2)$ and $d\gamma (e_1)=d\gamma (\frac{\partial }{\partial s})=\gamma'$. Thus, the tension field of the curve is given by $\tau(\gamma)=\nabla^{\gamma}_{e_1}d\gamma(e_1)=\nabla^N_{\gamma'}\gamma'$. It is also easy to see that for a function $f: (a, b)\longrightarrow (0,\infty) $, $\Delta f=f''$ and $\nabla^{\gamma}_{{\rm grad}f}\tau(\gamma)=f'\nabla^N_{\gamma'}\nabla^N_{\gamma'}\gamma'$. Substituting these into the $f$-biharmonic map equation gives the lemma.
\end{proof}
\begin{theorem}\label{CN}
An arclength parametrized curve $\gamma: (a, b)\longrightarrow N^n(C)$ in an $n$-dimensional space form is a proper $f$-biharmonic curve if and only if one of the following cases happens:
\begin{itemize}
\item[(i)] $\kappa_2= 0$, $f=c_1\kappa_1^{-3/2}$ and the curvature $\kappa_1$ solves the following ODE
\begin{equation}
3\kappa_1'^2-2\kappa_1\kappa_1''=4\kappa_1^2(\kappa_1^2-C).
\end{equation}
\item[(ii)] $\kappa_2\ne 0,\;\; \kappa_3=0,\;\;\kappa_2/\kappa_1=c_3$, $f=c_1\kappa_1^{-3/2}$, and the curvature $\kappa_1$ solves the following ODE
\begin{equation}
3\kappa_1'^2-2\kappa_1\kappa_1''=4\kappa_1^2[(1+c_3^2)\kappa_1^2-C].
\end{equation}
\end{itemize}
\end{theorem}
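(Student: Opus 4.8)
The plan is to feed the $f$-biharmonic curve equation (\ref{CURVE}) through the Frenet--Serret apparatus of $\gamma$ in the space form $N^n(C)$. First I would fix the Frenet frame $\{e_1=\gamma', e_2, e_3, e_4, \dots\}$ along $\gamma$, with geodesic curvatures $\kappa_1,\kappa_2,\kappa_3,\dots$ determined by $\nabla^N_{\gamma'}e_1=\kappa_1 e_2$, $\nabla^N_{\gamma'}e_2=-\kappa_1 e_1+\kappa_2 e_3$, $\nabla^N_{\gamma'}e_3=-\kappa_2 e_2+\kappa_3 e_4$, and so on. Using these I would record the tension field $\tau(\gamma)=\nabla^N_{\gamma'}\gamma'=\kappa_1 e_2$ and then iterate to express $\nabla^N_{\gamma'}\nabla^N_{\gamma'}\gamma'$ and $\nabla^N_{\gamma'}\nabla^N_{\gamma'}\nabla^N_{\gamma'}\gamma'$ as explicit combinations of $e_1,\dots,e_4$, whose coefficients are polynomial in $\kappa_1,\kappa_2,\kappa_3$ and their $s$-derivatives; in particular the $e_1$-coefficient of the third derivative is $-3\kappa_1\kappa_1'$ and the $e_4$-coefficient is $\kappa_1\kappa_2\kappa_3$.

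Next I would dispose of the curvature term. Since $N^n(C)$ is a space form, $R^N(X,Y)Z=C(\langle Y,Z\rangle X-\langle X,Z\rangle Y)$, so $R^N(\gamma',\nabla^N_{\gamma'}\gamma')\gamma'=\kappa_1 R^N(e_1,e_2)e_1=-C\kappa_1 e_2$, and this is the only place the sectional curvature $C$ enters. Substituting this together with $\nabla^N_{\gamma'}\gamma'=\kappa_1 e_2$ and the second and third covariant derivatives into (\ref{CURVE}), and then collecting the coefficients along $e_1,e_2,e_3,e_4$, converts the single vector equation into a system of four scalar equations in $\kappa_1,\kappa_2,\kappa_3,f$.

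The classification then falls out of reading these four scalar equations in order. The $e_4$-component is $f\kappa_1\kappa_2\kappa_3=0$; since $f>0$ and properness forces $\kappa_1\not\equiv 0$ (a curve with $\kappa_1\equiv 0$ is a geodesic, hence harmonic), this splits into exactly the two cases $\kappa_2=0$ and $\kappa_2\ne 0,\ \kappa_3=0$. The $e_1$-component reads $3f\kappa_1'+2f'\kappa_1=0$, which integrates at once to $f=c_1\kappa_1^{-3/2}$ in both cases. In case (ii) the $e_3$-component, after inserting $f'/f=-\tfrac32\,\kappa_1'/\kappa_1$, collapses to $\kappa_1\kappa_2'-\kappa_1'\kappa_2=0$, i.e. $\kappa_2/\kappa_1=c_3$ is constant. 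Finally the $e_2$-component, once $f=c_1\kappa_1^{-3/2}$ (and $\kappa_2=c_3\kappa_1$ in case (ii)) are inserted and the half-integer powers of $\kappa_1$ are cleared, reduces to the stated ODE $3\kappa_1'^2-2\kappa_1\kappa_1''=4\kappa_1^2(\kappa_1^2-C)$, respectively $3\kappa_1'^2-2\kappa_1\kappa_1''=4\kappa_1^2[(1+c_3^2)\kappa_1^2-C]$.

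Since everything is organized along the frame, I expect the main effort to be bookkeeping rather than insight: tracking the derivatives of $\kappa_1,\kappa_2$ across three covariant differentiations and correctly clearing the $\kappa_1^{\pm 3/2}$ and $\kappa_1^{\pm 5/2}$ factors in the $e_2$-equation. The genuinely delicate points are (a) ensuring the Frenet frame is well defined, which requires $\kappa_1>0$ and the higher curvatures to be either identically zero or nowhere zero on $(a,b)$, and (b) the handling of \emph{properness}: I must verify that the curves produced are neither harmonic ($\kappa_1\equiv 0$, excluded) nor biharmonic, the biharmonic sub-case corresponding to $f\equiv\textrm{const}$, equivalently $\kappa_1\equiv\textrm{const}$ (forcing $\kappa_1^2=C$). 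This confirms that cases (i)--(ii) describe precisely the \emph{proper} $f$-biharmonic curves, and it also yields the converse direction, since any pair $(\kappa_1,f)$ satisfying the listed ODE and relations can be read backwards through the four scalar equations to recover (\ref{CURVE}).
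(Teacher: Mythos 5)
Your proposal is correct and follows essentially the same route as the paper's proof: expand the $f$-biharmonic curve equation in the Frenet frame, compare coefficients along $F_1,\dots,F_4$ to get the four scalar equations, integrate the $e_1$- and $e_3$-components to obtain $f=c_1\kappa_1^{-3/2}$ and $\kappa_2/\kappa_1=c_3$, and reduce the $e_2$-component to the stated ODEs. Your explicit treatment of properness (excluding $\kappa_1\equiv 0$ and the constant-$\kappa_1$, constant-$f$ biharmonic subcase) and of the well-definedness of the Frenet frame is slightly more careful than the paper's, but it is the same argument.
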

\begin{proof}
Let $\gamma: (a, b)\longrightarrow N^n(C)$ be a curve with arclength parametrization. Let $\{F_i, \; i=1, 2, \cdots,n\}$ be the Frenet frame along the curve $\gamma(s)$, which is obtained as the orthonormalisation of the $n$-tuple
$\{\nabla^{(k)}_{\frac{\partial}{\partial s}} d\gamma ( \frac{\partial}{\partial s} )|\; k=1, 2, \cdots, n\}$. Then we have the following Frenet formula (see, e.g., \cite{La}) along the curve:
\begin{equation}
\begin{cases}
\nabla^{\gamma}_{\frac{\partial}{\partial s}}F_1=\kappa_1 F_2,\\
\nabla^{\gamma}_{\frac{\partial}{\partial s}}F_i=-\kappa_{i-1} F_{i-1}+\kappa_i F_{i+1},\; \forall \;i=2, 3, \cdots, n-1,\\
\nabla^{\gamma}_{\frac{\partial}{\partial s}}F_i=-\kappa_{n-1}F_{n-1},
\end{cases}
\end{equation}
where $\{\kappa _1, \kappa_2, \cdots, \kappa_{n-1}\}$ are the curvatures of the curve $\gamma$.\\
Using this formula and a straightforward computation one finds the tension and the bitension fields of the curve given by
\begin{eqnarray}\notag
\tau(\gamma)&=&\nabla^N_{\gamma'}\gamma'=\kappa_1 F_2,\\\notag
\nabla^N_{\gamma'}\nabla^N_{\gamma'}\gamma'&=&-\kappa_1^2F_1+\kappa_1' F_2 +\kappa_1\kappa_2 F_3, \; {\rm and}\\\notag
\tau_2(\gamma) &=&-3\kappa_1\kappa_1' F_1+(\kappa_1''-\kappa_1\kappa_2^2-\kappa_1^3+\kappa_1 C)F_2+\\\notag
&&+(2\kappa_1'\kappa_2+\kappa_1\kappa_2')F_3+\kappa_1\kappa_2\kappa_3F_4.
\end{eqnarray}
Substituting these into the $f$-biharmonic curve equation (\ref{CURVE}) and comparing the coefficients of both sides we have
\begin{equation}\label{37}
\begin{cases}
-3\kappa_1\kappa_1'-2\kappa_1^2f'/f=0,\\
\kappa_1''-\kappa_1\kappa_2^2-\kappa_1^3+\kappa_1 C+\kappa_1f''/f+2\kappa_1'f'/f=0,\\
2\kappa_1'\kappa_2+\kappa_1\kappa_2'+2\kappa_1\kappa_2 f'/f=0,\\
\kappa_1\kappa_2\kappa_3=0.
\end{cases}
\end{equation}
It is easy to see that if $\kappa_1=\rm constant\ne 0$, then the first equation of (\ref{37}) implies that $f$ is constant and the curve $\gamma$ is biharmonic. Also, if $\kappa_2=\rm constant\ne 0$, then the first and the third equations (\ref{37}) imply that $f$ is constant and hence the curve $\gamma$ is biharmonic again.\\
Now, if $\kappa_2= 0$, then the $f$-biharmonic curve equation (\ref{37}) is equivalent to 
\begin{equation}\label{GD28}
\begin{cases}
3\frac{\kappa_1'}{\kappa_1}+2 \frac{f'}{f}=0,\\
\frac{\kappa_1''}{\kappa_1}-\kappa_1^2+ C+\frac{f''}{f}+2\frac{\kappa_1'}{\kappa_1}\frac{ f'}{f}=0.
\end{cases}
\end{equation}
Integrating the first equation of (\ref{GD28}) and substituting the result in to the second we obtain statements in Case (i).\\
Finally, if $\kappa_1\ne \rm constant$ and $\kappa_2\ne \rm constant$, then the system (\ref{37}) is equivalent to
\begin{equation}\label{C4}
\begin{cases}
f^2\kappa_1^3=c_1^2,\\
(f\kappa_1)''=f\kappa_1 (\kappa_2^2+\kappa_1^2-C),\\
f^2\kappa^2\kappa_2=c_2,\\
\kappa_3=0.
\end{cases}
\end{equation}
Solving the first equation of (\ref{C4}) we obtain $f=c_1\kappa_1^{-3/2}$. Substituting the first equation into the third one in (\ref{C4}) we obtain $\kappa_2/\kappa_1=c_3$. Finally, substituting $\kappa_2/\kappa_1=c_3$ and $f\kappa_1=c_1\kappa_1^{-1/2}$ into the second equation of (\ref{C4}) we obtain the results stated in Case (ii). This completes the proof of the theorem.
\end{proof}
From the proof of Theorem \ref{CN} we have
\begin{corollary}
An arclength parametrized curve $\gamma: (a, b)\longrightarrow N^n(C)$ in an $n$-dimensional space form with constant geodesic curvature is biharmonic.
\end{corollary}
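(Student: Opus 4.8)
The plan is to read the conclusion off directly from the $f$-biharmonic curve system (\ref{37}) established while proving Theorem \ref{CN}, specialised to the hypothesis that the geodesic curvature is constant. Since the geodesic curvature of an arclength parametrised curve $\gamma$ is exactly its first Frenet curvature $\kappa_1=|\nabla^N_{\gamma'}\gamma'|$, the hypothesis is simply $\kappa_1=\mathrm{const}$; the curve is of course understood to be $f$-biharmonic (this being the standing setting of Theorem \ref{CN}, and being what makes the conclusion ``biharmonic'' meaningful).

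First I would dispose of the degenerate case $\kappa_1\equiv 0$. Here the tension field $\tau(\gamma)=\nabla^N_{\gamma'}\gamma'=\kappa_1 F_2$ vanishes, so $\gamma$ is a geodesic, hence harmonic, and therefore biharmonic by the inclusion $\{\text{harmonic maps}\}\subset\{\text{biharmonic maps}\}$ recorded in Section 1.

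For the principal case $\kappa_1=\mathrm{const}\ne 0$ I would insert $\kappa_1'=0$ into the first scalar equation of (\ref{37}), namely $-3\kappa_1\kappa_1'-2\kappa_1^2\,f'/f=0$. This collapses to $-2\kappa_1^2\,(f'/f)=0$; since $\kappa_1\ne 0$ and $f>0$ throughout, it forces $f'=0$, so $f$ is constant along $\gamma$. With $f$ constant we have $f'=f''=0$, whence the last two terms of the $f$-biharmonic curve equation (\ref{CURVE}) drop out and, dividing by the nowhere-zero factor $f$, the equation reduces to $\nabla^N_{\gamma'}\nabla^N_{\gamma'}\nabla^N_{\gamma'}\gamma'-R^N(\gamma',\nabla^N_{\gamma'}\gamma')\gamma'=0$, which is precisely the vanishing of the bitension field $\tau_2(\gamma)$. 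Thus $\gamma$ is biharmonic.

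No genuine obstacle arises: once $\kappa_1'=0$ is substituted, the conclusion is a one-line deduction, already noted in passing within the proof of Theorem \ref{CN}. The only points deserving a moment's care are the identification of the geodesic curvature with the first Frenet curvature $\kappa_1$, and the use of the standing positivity hypothesis $f>0$, which is exactly what lets us conclude $f'=0$ (rather than merely $\kappa_1=0$) from the first equation of (\ref{37}). In particular the argument shows there is no \emph{proper} $f$-biharmonic curve of constant geodesic curvature in a space form.
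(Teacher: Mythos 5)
Your proof is correct and follows exactly the paper's route: the corollary is extracted from system (\ref{37}) in the proof of Theorem \ref{CN}, where constancy of $\kappa_1\ne 0$ forces $f'=0$ via the first equation, after which the $f$-biharmonic equation (\ref{CURVE}) collapses to $\tau_2(\gamma)=0$. Your explicit treatment of the geodesic case $\kappa_1\equiv 0$ and your remark that the hypothesis ``$f$-biharmonic'' is implicit in the statement are sensible clarifications of points the paper leaves tacit, but the argument is the same.
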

It is known (\cite{Di}) that any biharmonic curve in a Euclidean space is a geodesic. It would be interesting to know if there is any proper $f$-biharmonic curve in a Euclidean space. Our next theorem gives a complete classification of proper $f$-biharmonic curves in $\r^3$ which, together with the fundamental theorem for curves in $\r^3$, can be used to produce many examples of proper $f$-biharmonic curves in a $3$-dimensional Euclidean space.
\begin{theorem}\label{R3}
An arclength parametrized curve $\gamma: (a, b)\longrightarrow \r^3$ in a $3$-dimensional Euclidean space is a proper $f$-biharmonic curve if and only if 
\begin{itemize}
\item[(i)] $\gamma$ is a planar curve with $\tau(s)= 0$, $\kappa(s)= \frac{4c_2}{16+(c_2 s+c_3)^2}$, and $f=c_1\kappa^{-3/2}$, where $c_1>0, c_2> 0, {\rm and}\; c_3$ are constants, or
\item[(ii)] $\gamma$ is a general helix with $\kappa(s)= \frac{4c_2}{16(1+c^2)+(c_2 s+c_3)^2},\;\;\tau/\kappa=c$, and $f=c_1\kappa^{-3/2}$, where $c\ne 0, c_1>0, c_2> 0, {\rm and}\; c_3$ are constants.
\end{itemize}
\end{theorem}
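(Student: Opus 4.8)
The plan is to obtain Theorem \ref{R3} as the specialization of Theorem \ref{CN} to the flat case $C=0$ in dimension $n=3$. In $\r^3$ the Frenet apparatus consists of only two curvatures, $\kappa_1=\kappa$ and $\kappa_2=\tau$, and there is no third curvature; hence the condition $\kappa_3=0$ occurring in case (ii) of Theorem \ref{CN} holds vacuously. Setting $C=0$ in the two ODEs provided by Theorem \ref{CN}, the classification immediately splits into the two announced cases: a planar curve ($\tau=0$) with $f=c_1\kappa^{-3/2}$ whose curvature satisfies $3\kappa'^2-2\kappa\kappa''=4\kappa^4$, and a general helix ($\tau/\kappa=c\ne 0$) with $f=c_1\kappa^{-3/2}$ whose curvature satisfies $3\kappa'^2-2\kappa\kappa''=4(1+c^2)\kappa^4$. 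Thus everything reduces to solving the single ODE
\begin{equation}\notag
3\kappa'^2-2\kappa\kappa''=4a\kappa^4,
\end{equation}
with $a=1$ in case (i) and $a=1+c^2$ in case (ii), and then exhibiting its solutions in the stated closed form.

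The decisive step is the substitution $v=\kappa^{-1/2}$. Computing $\kappa'$ and $\kappa''$ in terms of $v, v', v''$, I expect the left-hand side to collapse, reducing the equation to the far simpler
\begin{equation}\notag
v''=a\,v^{-3}.
\end{equation}
This is the heart of the argument: the nonlinear second-order equation for $\kappa$ becomes an autonomous equation for $v$ that admits an elementary first integral.

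To integrate $v''=a\,v^{-3}$, I would multiply through by $v'$ and recognize both sides as exact derivatives, obtaining the first integral $(v')^2=k^2-a\,v^{-2}$ for a constant $k^2>0$. Separating variables and substituting $w=v^2$ turns the remaining quadrature into an elementary one, yielding $k^2v^2=a+(k^2s+C')^2$. Passing back through $\kappa=v^{-2}$ and renaming the integration constants (taking $k^2=c_2/4$ and $c_3=4C'$) then produces exactly
\begin{equation}\notag
\kappa(s)=\frac{4c_2}{16a+(c_2s+c_3)^2},
\end{equation}
which for $a=1$ is the curvature in case (i) and for $a=1+c^2$ is the curvature in case (ii). Positivity of $\kappa$ and of $f=c_1\kappa^{-3/2}$ forces the sign conventions $c_1>0$ and $c_2>0$.

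The routine calculations are the two derivative computations (verifying the collapse to $v''=a\,v^{-3}$) and the final quadrature; the one genuine idea is the substitution $v=\kappa^{-1/2}$, so I expect no serious obstacle beyond carefully matching the integration constants to the normalization in the statement. For the converse direction, one checks that the resulting $\kappa$ is nonconstant, so that $f$ is nonconstant and the curve is neither a geodesic nor biharmonic, i.e.\ genuinely proper, while the fundamental theorem for curves in $\r^3$ guarantees that each admissible choice of $(\kappa,\tau)$ is realized by an actual curve.
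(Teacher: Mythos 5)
Your proposal is correct and follows essentially the same route as the paper: specialize Theorem \ref{CN} to $C=0$ (noting $\kappa_3=0$ is vacuous in $\r^3$ and invoking Lancret's theorem for the helix case), then solve the resulting ODEs. The paper simply states ``solving the ODEs in each case'' without detail, whereas you supply the missing quadrature via the substitution $v=\kappa^{-1/2}$, reducing $3\kappa'^2-2\kappa\kappa''=4a\kappa^4$ to $v''=av^{-3}$, and your integration and constant-matching are correct.
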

\begin{proof}
For the arclength parameterized curve $\gamma: (a, b)\longrightarrow \r^3$, we have the curvature $\kappa=\kappa_1$ and the torsion $\tau=\kappa_2$. Applying Theorem \ref{CN} with $C=0$ we conclude that the curve $\gamma$ is a proper $f$-biharmonic curve if and only if
\begin{itemize}
\item[(i)] $\tau= 0$, $f=c_1\kappa^{-3/2}$ and the curvature $\kappa$ solves the following ODE
\begin{equation}
3\kappa'^2-2\kappa\kappa''=4\kappa^4, \; {\rm or}
\end{equation}
\item[(ii)] $\tau\ne 0,\;\;\tau/\kappa=c$, $f=c_1\kappa_1^{-3/2}$, and the curvature $\kappa$ solves the following ODE
\begin{equation}
3\kappa'^2-2\kappa\kappa''=4(1+c^2)\kappa^4.
\end{equation}
\end{itemize}
Solving the ODEs in each case and noting that $\tau=0$ means the curve is planar and $\tau/\kappa=c$ means the curve is a general helix ( Lancret Theorem, see, e.g., \cite{Ba}) we obtain the theorem.
\end{proof}
\begin{remark}
(A) Recall that the fundamental theorem for curves in $\r^3$ states that for any given functions $p, q: [s_0,s_1]\longrightarrow \r$ with $p(s)>0,\; \forall \,s \in [s_0,s_1]$, there exists a unique (up to a rigid motion) curve in $\r^3$ whose curvature and torsion take on the prescribed functions $\kappa(s)=p(s), \; \tau(s)=q(s)$. This, together with our Theorem \ref{R3}, implies that there are many examples of proper $f$-biharmonic curves in $\r^3$.\\
(B) Our classification theorem also implies that proper $f$-biharmonic curves in $\r^3$ must be special subclasses of planar curves or general helices in $\r^3$. As the following example shows that there are  general helices which are not proper $f$-biharmonic curves.
\end{remark}
\begin{example}
The general helix $\gamma:I\longrightarrow \r^3$ with $\gamma (s)=(\frac{2}{3} (1+s/2)^{3/2}, \frac{2}{3} (1-s/2)^{3/2}, s/\sqrt{2} )$ is never an $f$-biharmonic curve for any function $f$.\\

 In fact, one can easily check that $|\gamma'(s)|=1$ so $s$ is an arclength parameter for the curve. A straightforward computation gives $\kappa(s)=\tau(s)=\frac{1}{2\sqrt{2}\sqrt{4-s^2}}$. So, the curve is indeed a general helix with $\tau/\kappa=1$. Since the curvature is not of the form given in Case (ii) of Theorem \ref{R3} we conclude that the helix is never an $f$-biharmonic curve for any $f$.\\
\end{example}
Finally, we give an example of a proper $f$-biharmonic planar curve to close this section.
\begin{example}
The planar curve $\gamma(s)=(4\ln |\sqrt{16+s^2}+s|, \sqrt{16+s^2})$ is a proper $f$-biharmonic curve. \\

In fact, we can check that 
\begin{eqnarray}
\gamma'(s)=\left(\frac{4}{\sqrt{16+s^2},}\; \frac{s}{\sqrt{16+s^2},}\right),\; {\rm and}\;\; |\gamma'(s)|=1.
\end{eqnarray}
So $s$ is the arclength parameter of the curve. In this case, we have the curvature $\kappa(s)=|\gamma''(s)|=\frac{4}{16+s^2}$ and, by (i) of Theorem \ref{R3}, the curve $\gamma$ is a proper $f$-biharmonic curve with $f=8c_1(16+s^2)^{3/2}$ for some constant $c_1>0$.
\end{example}

\end{document}